\documentclass[pdflatex,sn-mathphys-num]{sn-jnl}

\usepackage{graphicx}%
\usepackage{multirow}%
\usepackage{amsmath,amssymb,amsfonts}%
\usepackage{amsthm}%
\usepackage{mathrsfs}%
\usepackage[title]{appendix}%
\usepackage{xcolor}%
\usepackage{textcomp}%
\usepackage{manyfoot}%
\usepackage{booktabs}%
\usepackage{algorithm}%
\usepackage{algorithmic}%
\usepackage{listings}%

\usepackage{lipsum}
\usepackage{graphicx}
\usepackage{subfigure} 
\usepackage{caption}
\usepackage{hyperref}
\usepackage{mathtools}
\usepackage{epstopdf}

\theoremstyle{thmstyleone}%
\newtheorem{theorem}{Theorem}
\newtheorem{lemma}{Lemma}
\newtheorem{corollary}{Corollary}
\theoremstyle{thmstyletwo}%
\newtheorem{remark}{Remark}%

\theoremstyle{thmstylethree}%
\newtheorem{definition}{Definition}%

\newcommand{\argmin}{\operatornamewithlimits{\arg\,\min}}

\begin{document}

\title[MPFA: A Pareto Front Approximation Method for  Riemannian  Bi-objective Optimization]{MPFA: A Pareto Front Approximation Method for  Riemannian  Bi-objective Optimization}


\author[1,2]{\fnm{Kaiping} \sur{Liu}}\email{kaipliu@163.com}

\author*[2]{\fnm{P.-A.} \sur{Absil}}\email{pa.absil@uclouvain.be}

\author[1]{\fnm{Jiawei} \sur{Chen}}\email{j.w.chen713@163.com}

\affil[1]{\orgdiv{School of Mathematics and Statistics}, \orgname{Southwest University}, \orgaddress{\city{Chongqing}, \postcode{400715}, \country{China}}}

\affil[2]{\orgdiv{ICTEAM Institute}, \orgname{UCLouvain}, \orgaddress{\city{1348 Louvain-la-Neuve}, \country{Belgium}}}


\abstract{We propose a Pareto front approximation (MPFA) method for smooth bi-objective optimization problems on Riemannian manifolds based on a Hermite interpolation technique. Compared with the existing multiobjective optimization numerical algorithms, the proposed method can generate a continuous approximate Pareto front without multiple initial points. We establish convergence of the proposed method and analyze the approximation error of the resulting Pareto front. Numerical experiments on several test problems demonstrate that the proposed approach can effectively approximate the Pareto front with high accuracy and reasonable computational cost. Furthermore, the method is applied to a bi-objective formulation of sparse principal component analysis, illustrating its practical applicability in data analysis problems.}


\keywords{Bi-objective optimization, Pareto front, Hermite interpolation, Sparse principal component analysis}



\maketitle

\section{Introduction}\label{sec1}

Multiobjective optimization (MOO), also referred to as multicriteria optimization, concerns the simultaneous optimization of several objective functions. Such problems arise naturally in many applications including engineering design, operations management, economics, finance, and agriculture \cite{w90,m99,g18}, where multiple conflicting criteria must be considered in the decision-making process. In contrast to single-objective optimization, in general there exists no single solution that simultaneously minimizes all objective functions. Instead, the concept of optimality is replaced by the notion of Pareto optimality \cite{m99, sn85}.  The set of all Pareto optimal points forms the Pareto set, and its image in the objective space is called the Pareto front. The approximation of the Pareto front is of central importance since it reveals the trade-offs among competing objectives and provides valuable information for decision makers.

In recent years, multiobjective optimization problems (MOPs) on Riemannian manifolds have received significant attention.  Some applications of multiobjective optimization on Riemannian
manifolds can be found in \cite{cs21}.
Consequently, the development of efficient algorithms for MOPs on manifolds has become an active research area.  Numerous Riemannian optimization methods have been proposed, including steepest descent methods \cite{bf12,bn13}, subgradient method \cite{bn13s}, proximal point method \cite{bn18}, conjugate gradient methods \cite{nh23}, and trust-region methods \cite{en23}. However, these methods are designed to compute individual Pareto optimal points rather than directly generating the entire Pareto front. 
To construct the  entire Pareto front, one  must execute the optimization algorithm many times over by using different starting points, which can be computationally expensive. Can the entire Pareto front be approximated more efficiently, perhaps from a single initial point? This question defines a crucial research gap, as methodologies with such a capability remain largely unexplored.

Consider the following bi-objective optimization problem on a Riemannian manifold $\mathcal{M}$,
\begin{equation}\label{bo}
\min_{x\in \mathcal{M} } F(x) \coloneqq ( f_1(x), f_2(x) ),
\end{equation}
where $f_1,f_2:\mathcal{M}\to\mathbb{R}$ are continuously differentiable functions. A classical strategy for handling multiobjective optimization problems is scalarization, which converts the multiobjective problem into the single-objective problems. In particular, it transforms \eqref{bo} into the single-objective optimization problem:
\begin{equation}\label{sp}
\min_{x\in \mathcal{M} } \varphi_\mu (x) \coloneqq f_1(x)+\mu f_2(x), \quad \mu>0.
\end{equation}
For each $\mu>0$, 
a solution of the problem \eqref{sp} on the compact Riemannian manifold $\mathcal{M}$ is denoted by
\begin{equation}\label{fgamma}
\gamma(\mu)\coloneqq \argmin_{x\in\mathcal{M}} \varphi_\mu (x).
\end{equation}
Consequently, the set
$
\{(f_1(\gamma(\mu)),f_2(\gamma(\mu))) : \mu>0\}
$ 
provides a subset of the Pareto front.

However, obtaining a sufficiently dense sampling of the Pareto front by directly solving the scalarized problem for many values of the parameter $\mu$ can be computationally expensive, since each evaluation requires solving a nonlinear optimization problem on the manifold. This issue becomes particularly pronounced when high accuracy is required or when the optimization problem itself is costly to solve.

In this paper, to overcome this difficulty, we propose a manifold Pareto front approximation (MPFA) framework based on curve fitting techniques on manifolds. The main idea is to compute a small number of sample solutions $\gamma(\mu_1),\ldots,\gamma(\mu_N)$ of the scalarized problem \eqref{sp} using the Riemannian steepest descent method and the Riemannian Newton method. These solutions serve as anchor points for constructing an approximation of the solution curve $\gamma$. By estimating tangent information along the curve and employing Hermite interpolation on the manifold \cite{z20}, we obtain a smooth approximation $\tilde{\gamma}$ of $\gamma$. Once this approximation is available, points on an approximate Pareto front can be generated efficiently by evaluating $(f_1(\tilde{\gamma}(\mu)),f_2(\tilde{\gamma}(\mu)))$ for different values of $\mu$, without repeatedly solving expensive optimization problems. We provide the error and convergence analysis for the proposed algorithm, which can be found in Section \ref{sec4}. 
Numerical experiments for several test problems on the sphere manifold show that MPFA produces accurate Pareto front approximations using only a single initial point. In particular, the method based on Hermite interpolation  achieves the smallest approximation errors compared with piecewise geodesic and Lagrange interpolation.
Moreover, the approximation error decreases steadily as the number of anchor points increases, and the observed convergence rate is close to the theoretical prediction. Finally, the proposed framework is successfully applied to a bi-objective sparse PCA problem.

The main contributions of this work are summarized as follows.
\begin{itemize}
\item We propose a  Pareto front approximation method for
bi-objective optimization problems on Riemannian manifolds, based on
scalarization and manifold Hermite interpolation.

\item We derive a tangent characterization of the scalarized solution
path using the implicit function theorem, which enables the construction
of an interpolation scheme on the manifold.

\item We establish an approximation error estimate for the proposed
method. Moreover, numerical experiments demonstrate  the effectiveness of the proposed algorithm.

\end{itemize}

Whereas Pareto front interpolation methods are available in the Euclidean case~\cite{hm12, bs17, hm19}, it seems that our MPFA framework remains novel when the manifold $\mathcal{M}$ reduces to a Euclidean space.

The remainder of this paper is organized as follows. Section \ref{sec2} briefly reviews necessary concepts from Riemannian geometry and multiobjective optimization. Section \ref{sec3} presents the MPFA method. Section \ref{sec4} establishes the error and convergence results. Numerical experiments are reported in Section \ref{sec5}. We make some final conclusions in Section \ref{sec6}.

\section{Preliminaries}\label{sec2}

In this section, we briefly review some basic concepts and notation from Riemannian geometry and multiobjective optimization that will be used throughout the paper. These concepts provide the necessary  foundation for the development and analysis of the proposed method.

The Riemannian concepts used in this paper follow standard references on
Riemannian optimization; see, for example, \cite{am08, b23}.
A Riemannian manifold $\mathcal{M}$ is a smooth manifold endowed with a
Riemannian metric $\langle \cdot , \cdot \rangle_x$ for each $x \in \mathcal{M}$.
The metric defines an inner product on the tangent space $T_x\mathcal{M}$ and
induces the norm
$\|\eta_x\|_x = \sqrt{\langle \eta_x,\eta_x\rangle_x}, \, \eta_x \in T_x\mathcal{M}$.
When the point $x$ is clear from the context, we simply write $\|\eta\|$. Let $x, y \in \mathcal{M}$, $\mathrm{dist}(x,y)$ denotes the Riemannian distance between $x$ and $y$.

The Riemannian exponential map at $x$ is
denoted by $\mathrm{Exp}_x : T_x\mathcal{M} \rightarrow \mathcal{M}$,
which maps a tangent vector $\eta_x$ to the point reached at time $1$ by the geodesic
starting from $x$ with initial velocity $\eta_x$.
Following \cite[Definition~4.3]{s96}, let $\mathcal I_x$
denote the interior set of $x$, and let $\mathcal I_x^T$
denote the corresponding tangent interior set at $x$.
For each $y\in\mathcal I_x$, the logarithm map is defined by $\mathrm{Log}_x(y):=\xi_x$,
where $\xi_x\in\mathcal I_x^T$ is the unique tangent vector satisfying $\mathrm{Exp}_x(\xi_x)=y$.
For a differentiable mapping $f : \mathcal{M} \rightarrow \mathcal{N}$,
its differential at $x$ is the linear mapping
$\mathrm{d}f(x) : T_x\mathcal{M} \rightarrow T_{f(x)}\mathcal{N}$.
In particular, the chain rule extends naturally to the manifold setting.

Next, we introduce some concepts of the multiobjective optimization.
Let $\mathbb{R}_{++}^n$ and $\mathbb{R}_+^n$ be the positive and  nonnegative orthant of $\mathbb{R}^n$, respectively. For $x,y \in \mathbb{R}^n$, the partial
order $x \preceq y$ holds if and only if $y-x \in \mathbb{R}_+^n$. If $n=1$, then $\preceq$  is equivalent to $\leq$. Likewise, the partial
order $x \prec y$ holds if and only if $y-x \in \mathbb{R}_{++}^n$. If $n=1$, then $\prec$  is equivalent to $<$. Now, let us recall several useful definitions and results.
\begin{definition} \cite{bf12, bn13, e05, g68}
	\item {\bf (i)} A point \(\hat{x} \in \mathcal{M}\)  is called \emph{efficient} (or \emph{Pareto optimal}), if there exists no $x \in \mathcal{M}  $ such that
	\begin{equation*}
		F(x ) \preceq F(\hat{x} )~{\rm and} ~F(x ) \neq F(\hat{x} ).
	\end{equation*}
	
	\item {\bf (ii)}  A point \(\hat{x} \in \mathcal{M} \) is called \emph{properly efficient}, if it is efficient and there is a real number \(T > 0\) such that for all \(i\) and \(x \in \mathcal{M} \) satisfying \(f_i(x) < f_i(\hat{x})\), there exists an index \(j\) such that \(f_j(\hat{x}) < f_j(x)\) such that
	
	\[\frac{f_i(\hat{x}) - f_i(x)}{f_j(x) - f_j(\hat{x})} \leq T.
	\]
\end{definition}

The next result is a direct application of the extreme value theorem \cite{rw98}.
\begin{lemma}
	Let  $\mathcal{M}$ be a compact Riemannian manifold and $f_1, f_2: \mathcal{M} \rightarrow \mathbb{R}$ be continuous functions. Then, for all $\mu>0$, \eqref{sp} admits at least one global minimizer $x_\mu \in \mathcal{M}$.
\end{lemma}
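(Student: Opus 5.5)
The plan is to reduce the lemma directly to the extreme value theorem for continuous real-valued functions on compact topological spaces. Fix an arbitrary $\mu>0$. I would first check that the scalarized objective $\varphi_\mu = f_1 + \mu f_2$ in \eqref{sp} is continuous on $\mathcal{M}$. This holds because sums and positive scalar multiples of continuous real-valued functions are continuous; the sign of $\mu$ is in fact irrelevant for this step.

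Next I would record the topological facts needed. A Riemannian manifold is in particular a topological space, and its manifold topology coincides with the metric topology induced by $\mathrm{dist}(\cdot,\cdot)$. Compactness of $\mathcal{M}$ is assumed, so $\varphi_\mu$ is a continuous function on a nonempty compact space. I would mention nonemptiness explicitly, since a manifold is tacitly nonempty; otherwise the statement is vacuous.

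The main step applies the extreme value theorem \cite{rw98}. The image $\varphi_\mu(\mathcal{M})\subset\mathbb{R}$ is the continuous image of a compact set, hence compact, hence closed and bounded. Therefore $m:=\inf_{x\in\mathcal{M}}\varphi_\mu(x)$ is finite and belongs to $\varphi_\mu(\mathcal{M})$, so there exists $x_\mu\in\mathcal{M}$ with $\varphi_\mu(x_\mu)=m$, which is a global minimizer of \eqref{sp}.

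A sequential version of the same argument also works. Take a minimizing sequence $(x_k)$ with $\varphi_\mu(x_k)\to m$; extract a convergent subsequence by compactness of the metric space $(\mathcal{M},\mathrm{dist})$; then pass to the limit using continuity. Since $\mu>0$ was arbitrary, the claim holds for all $\mu>0$. I expect no genuine obstacle here. The only point deserving a sentence is the identification of the manifold topology with a compact (metric) space, so that the extreme value theorem applies verbatim.
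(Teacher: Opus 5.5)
Your proposal is correct and takes the same route as the paper, which gives no further argument and simply records the lemma as a direct application of the extreme value theorem to the continuous function $\varphi_\mu = f_1+\mu f_2$ on the compact space $\mathcal{M}$. Your extra remarks on nonemptiness and the sequential variant are harmless, but the topological form of the theorem does not require identifying the manifold topology with a metric one.
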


\begin{lemma}\label{sp_pes}
	Let $\mu_k>0, k=1,\dots,p$, be positive weights and $X$ be a feasible set. If $\bar{x}$ is an optimal solution of  
	\begin{equation*}
		\min_{x\in X } \sum_{k=1}^{p} \mu_k f_k(x),
	\end{equation*}
	then $\bar{x}$ is a properly efficient solution of 
	\begin{equation*}
		\min_{x\in X } \left( f_1(x),\dots, f_p(x) \right).
	\end{equation*}
\end{lemma}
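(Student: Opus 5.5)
This is Geoffrion's classical argument, and it splits into two parts: efficiency first, then the bounded trade-off. Write $w(x)=\sum_{k=1}^p \mu_k f_k(x)$, so that $w(\bar x)\le w(x)$ for every $x\in X$.

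\emph{Efficiency.} Suppose $\bar x$ were not efficient. Then some $x\in X$ satisfies $F(x)\preceq F(\bar x)$ and $F(x)\neq F(\bar x)$. Hence $f_k(x)\le f_k(\bar x)$ for all $k$, with strict inequality for at least one index $k_0$. Since every $\mu_k>0$, summing gives $w(x)<w(\bar x)$, which contradicts the optimality of $\bar x$. This step is immediate; the only ingredient is strict positivity of the weights.

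\emph{Proper efficiency.} I would take the explicit constant
\begin{equation*}
T := (p-1)\max_{i,j}\frac{\mu_j}{\mu_i},
\end{equation*}
or $T=1$ if $p=1$, in which case the condition is vacuous. Fix $i$ and $x\in X$ with $f_i(x)<f_i(\bar x)$.
\begin{itemize}
\item Optimality gives $\sum_k \mu_k\bigl(f_k(x)-f_k(\bar x)\bigr)\ge 0$.
\item Because the $i$-th term is negative, the set $J=\{j\neq i: f_j(x)>f_j(\bar x)\}$ is nonempty. This supplies the index $j$ with $f_j(\bar x)<f_j(x)$ required in the definition.
\item Rearranging the optimality inequality and dropping the terms with $f_k(x)\le f_k(\bar x)$ gives
\begin{equation*}
\mu_i\bigl(f_i(\bar x)-f_i(x)\bigr)\le \sum_{j\in J}\mu_j\bigl(f_j(x)-f_j(\bar x)\bigr).
\end{equation*}
\item Choose $j^*\in J$ maximizing $\mu_j\bigl(f_j(x)-f_j(\bar x)\bigr)$ over $J$. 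The right-hand side is at most $(p-1)\mu_{j^*}\bigl(f_{j^*}(x)-f_{j^*}(\bar x)\bigr)$.
\item Dividing by $\mu_i\bigl(f_{j^*}(x)-f_{j^*}(\bar x)\bigr)>0$ yields
\begin{equation*}
\frac{f_i(\bar x)-f_i(x)}{f_{j^*}(x)-f_{j^*}(\bar x)}\le (p-1)\frac{\mu_{j^*}}{\mu_i}\le T.
\end{equation*}
\end{itemize}
This is exactly the bound in the definition of proper efficiency.

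The main point needing care is the choice of $j$ together with a uniform $T$. The definition needs a single $T$ valid for all $i$ and $x$, whereas the inequality from optimality controls only a weighted sum over all the worsened objectives. Taking the index with the largest weighted increase converts that sum into a single-ratio bound, at the price of the factor $p-1$. The resulting $T$ depends only on the weights, so it is uniform. No structure of $X$ (manifold or otherwise) is used, so the lemma holds for an arbitrary feasible set. In the bi-objective case~\eqref{sp} ($p=2$, $\mu_1=1$, $\mu_2=\mu$), one can take $T=\max\{\mu,1/\mu\}$.
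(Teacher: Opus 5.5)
Your proof is correct and follows essentially the same route as the paper, which simply invokes Geoffrion's Theorem~1 after rewriting the problem as the maximization of $-f_1,\dots,-f_p$; you have written out that argument directly in minimization form, with Geoffrion's constant $T=(p-1)\max_{i,j}\mu_j/\mu_i$. The only cosmetic difference is that Geoffrion argues by contradiction, summing the violated ratio inequalities over $j\neq i$, whereas you argue directly by selecting the index $j^*$ with the largest weighted increase; the two are equivalent, and you also handle the requirement $T>0$ in the case $p=1$.
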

\begin{proof}
	This follows directly from \cite[Theorem 1]{g68} by considering the equivalent vector maximization problem with objective functions $-f_k, k=1, \ldots, p$.
\end{proof}

\begin{remark}\label{rem:convexity}
	It is worth noting that while Lemma \ref{sp_pes} ensures that every solution $\gamma(\mu)$ of the scalarized problem \eqref{sp} is properly efficient, the converse proposition requires additional assumptions, such as the (geodesic) convexity of the objective functions and the manifold $\mathcal{M}$. However, in this paper, we focus on efficiently approximating the curve $\gamma$.
\end{remark}

\section{MPFA Method}\label{sec3}

In this section, we present the manifold Pareto front approximation (MPFA) algorithm (Algorithm \ref{alg:mpfa}) for solving the bi-objective optimization problem on Riemannian manifolds \eqref{bo}. 
The algorithm first computes a set of anchor points by solving a family of scalarized optimization problems using the steepest descent solver provided by \texttt{Manopt} \cite{Manopt2014}. More precisely, for each fixed scalarization parameter $\mu$, we
first apply  \texttt{steepestdescent} solver to obtain an initial
approximation of a stationary point of \eqref{sp}.
This initial approximation is then refined by a Riemannian Newton-type
correction. The use of the \texttt{steepestdescent} provides a globally
convergent initialization, while the Newton method (Algorithm \ref{alg:newton}) improves the local
accuracy of the anchor points used in the subsequent Hermite interpolation.
Based on these anchor points and the estimated velocity information along the solution path, a smooth approximation of the solution curve is constructed via Hermite interpolation on the manifold. Finally, we propose the approximate Pareto front.

First, the algorithm of producing an approximation to the solution curve $\gamma$ for the scalarized problem \eqref{sp} is given in Algorithm~\ref{alg:mpfa}.

\begin{algorithm}[htbp]
	\caption{Manifold Pareto Front Approximation (MPFA)}
	\label{alg:mpfa}
	\begin{algorithmic}[1]
		\STATE \textbf{Given:}
		Manifold $\mathcal{M}$, objective functions $f_1,f_2:\mathcal{M}\to\mathbb{R}$,
		Riemannian gradients $\mathrm{grad}\,f_1$, $\mathrm{grad}\,f_2$,
		$N \in\mathbb{N}$, 
		tolerance $\epsilon>0$,
		initial point $x_0\in \mathcal{M}$

		\STATE Determine $\mu_{\min},\mu_{\max}$ adaptively 
		using Algorithm~\ref{alg:adaptive-mu}
		
		\STATE $\text{AnchorPoints} \coloneqq \emptyset$, 
		$\text{MuList} \coloneqq \emptyset$
		
		\FOR{$i=1$ to $N$}
		\STATE $\mu_i\coloneqq 
		\mu_{\min} + (\mu_{\max}-\mu_{\min})\frac{i-1}{N-1}$
		\IF{$i=1$} 
		\STATE Compute $$x_1 \coloneqq
		\argmin_{x\in\mathcal M}\varphi_{\mu_1}(x),$$
		using a Riemannian steepest descent method (see \texttt{Manopt} \cite{Manopt2014}) 
		\ELSE
		\STATE Compute $$x_i \coloneqq
		\argmin_{x\in\mathcal M}\varphi_{\mu_{i}}(x),$$
		using a Riemannian Newton method  (Algorithm~\ref{alg:newton}) initialized at $x_{i-1}$
		\ENDIF 
		\STATE $\textrm{AnchorPoints}(i) \coloneqq x_i$
		\STATE $\textrm{MuList}(i) \coloneqq \mu_i$
		
		\STATE  $v_i \coloneqq -{\operatorname{Hess}  \varphi_{\mu_i}(x_i)}^{-1}\operatorname{grad} f_2(x_i)$
		\STATE $V[i] \coloneqq v_i$
		\ENDFOR
		
		\STATE $\tilde{\gamma} \coloneqq
		\textsc{HermiteInterpolationOnManifold}
		(\text{MuList}, \text{AnchorPoints}, V,\mathcal{M})$  (see Algorithm~\ref{alg:hermite-interp}) \label{11sol}
		\STATE Define $\Phi:[\mu_{\min},\mu_{\max}]\to\mathbb{R}^2$ by
		\[
		\Phi(\mu)=\big(f_1(\tilde{\gamma}( \mu)),
		f_2(\tilde{\gamma}(\mu))\big).
		\]
	\end{algorithmic}
\end{algorithm}

\begin{algorithm}
\caption{Adaptive Determination of $\mu_{\min}$ and $\mu_{\max}$}
\label{alg:adaptive-mu}
\begin{algorithmic}[1]
	\STATE Given: Manifold $\mathcal{M}$, functions $f_1, f_2$, small constants $\delta=10^{-6}$, $\epsilon_1=10^{-12}$, $c>1$, $w>1$, $K\in \mathbb{N}$. 
	
	\STATE Compute $$x_1^* = \argmin_{x \in \mathcal{M}} f_1(x)$$ using a Riemannian steepest descent method (see \texttt{Manopt}) 
	\STATE Compute $$x_2^* =\argmin_{x \in \mathcal{M}} f_2(x)$$ using a Riemannian steepest descent method (see \texttt{Manopt})

	\STATE Set
	\[
	\mu_{\mathrm{low}}
	=
	\frac{
		\|\operatorname{grad}f_1(x_1^*)\|+\delta
	}{
		\|\operatorname{grad}f_2(x_1^*)\|+\delta
	},
	\qquad
	\mu_{\mathrm{high}}
	=
	\frac{
		\|\operatorname{grad}f_1(x_2^*)\|+\delta
	}{
		\|\operatorname{grad}f_2(x_2^*)\|+\delta
	}.
	\]

	\STATE $\mu_{\text{test}} = \mu_{\text{low}} / c$
	\FOR{$k=1,\ldots,K$}
	\STATE Compute
	$$
	x_{\mathrm{test}}
	=
	\argmin_{x\in\mathcal M}
	\bigl(f_1(x)+\mu_{\mathrm{test}}f_2(x)\bigr),
	$$
	using a Riemannian steepest descent method (see \texttt{Manopt}) initialized at $x_1^*$.
	\IF{$\text{dist}_{\mathcal{M}}(x_{\text{test}}, x_2^*) > \epsilon_1$}
	\STATE $\mu_{\text{low}} = \mu_{\text{test}}$
	\STATE $\mu_{\text{test}} = \mu_{\text{test}} / c$ \COMMENT{Test smaller $\mu$}
	\ELSE
	\STATE Break.
	\ENDIF
	\ENDFOR
	
	\STATE $\mu_{\text{test}} = \mu_{\text{high}} \times w$
	\FOR{$k=1,\ldots,K$}
	\STATE Compute
	$$
	x_{\mathrm{test}}
	=
	\argmin_{x\in\mathcal M}
	\bigl(f_1(x)+\mu_{\mathrm{test}}f_2(x)\bigr),
	$$ 
	using a Riemannian steepest descent method (see \texttt{Manopt}) 
	initialized at $x_2^*$.
	\IF{$\text{dist}_{\mathcal{M}}(x_{\text{test}}, x_1^*) > \epsilon_1$}
	\STATE $\mu_{\text{high}} = \mu_{\text{test}}$
	\STATE $\mu_{\text{test}} = \mu_{\text{test}} \times w$ \COMMENT{Test larger $\mu$}
	\ELSE
	\STATE Break.
	\ENDIF
	\ENDFOR
	
	\STATE Set $$\mu_{\min} = \mu_{\text{low}}, \quad
	\mu_{\max} = \mu_{\text{high}}$$ 
	
	\RETURN $\mu_{\min}, \mu_{\max}$
\end{algorithmic}
\end{algorithm}

\begin{remark} 
Algorithm \ref{alg:adaptive-mu} is introduced to adaptively determine the range of scalarization weights $[\mu_{\min}, \mu_{\max}]$ that 
provides a practical interval for sampling parameters.
In bi-objective optimization, a fixed weight interval carries the risk of either omitting significant segments of the Pareto front when too narrow, or incurring unnecessary computational expense on redundant solutions when too wide. By exploring the extremes of the scalarized problem,  Algorithm \ref{alg:adaptive-mu} estimates $\mu_{\min}$ and $\mu_{\max}$ so that the anchor points generated in Algorithm \ref{alg:mpfa} land on the critical region.  This adaptive initialization enhances the efficiency and robustness of the Manifold Pareto front approximation method (Algorithm \ref{alg:mpfa}), making it applicable to a wide variety of problems without requiring manual tuning of the weight range.
\end{remark}
\begin{algorithm}
\caption{Riemannian Newton Method}
\label{alg:newton}
\begin{algorithmic}[1]
	\STATE \textbf{Input:} Manifold $\mathcal M$, retraction $R$, initial point $x_0\in \mathcal{M}$, objective function $\varphi_\mu$,
	$K\in \mathbb{N}$, tolerance $\varepsilon>0$.
	
	\STATE Set $x \coloneqq x_0$.
	
	\FOR{$k=1,\ldots,K$}
	\STATE Compute
	\[
	g \coloneqq \operatorname{grad}\varphi_\mu(x).
	\]
	\IF{$\|g\|_x\leq \varepsilon$}
	\STATE Stop the iteration.
	\ENDIF
	
	\STATE Solve the tangent-space Newton equation
	\[
	\operatorname{Hess}\varphi_\mu(x)[\xi]
	=
	-g,
	\qquad
	\xi\in T_x\mathcal M.
	\]

	\STATE Choose a step size $\alpha>0$ by Armijo line search starting at $\alpha = 1$.
	
	\STATE Update
	\[
	x \coloneqq R_x(\alpha \xi),
	\]
	where $R$ is a retraction on $\mathcal M$.
	\ENDFOR
	
	\STATE \textbf{return} $x$.
\end{algorithmic}
\end{algorithm}

\begin{algorithm}
\caption{\textsc{HermiteInterpolationOnManifold} \cite[Theorem 1]{z20})}
\label{alg:hermite-interp}
\begin{algorithmic}[1]
	\STATE Given: Manifold $\mathcal{M}$,
	\text{MuList} $\{\mu_i\} \subset \mathbb{R}$,
	\text{AnchorPoints} $\{x_i\} \subset \mathcal{M}$, 
	velocity vectors $v_i\in V$.
	
	\FOR{each consecutive pair $(\mu_i, \mu_{i+1})$}
	\STATE Choose $p  \coloneqq x_i$, $q  \coloneqq x_{i+1}$
	\STATE Compute $\Gamma_p \coloneqq \operatorname{Log}_q(p)$
	\STATE Compute $\hat{v}_p \coloneqq d(\operatorname{Log}_q)_p(v_p) \in T_q\mathcal{M}$ \cite[eq. (14)]{z20}
	\STATE Define cubic Hermite basis functions $a_0(\mu), b_0(\mu), b_1(\mu)$ \cite[Theorem 1]{z20} on $[\mu_i, \mu_{i+1}]$
	\STATE Build tangent space curve:
	\begin{equation}\label{hcurve}
		\tilde\Gamma_i(\mu) = a_0(\mu) \Gamma_p + b_0(\mu) \hat{v}_p + b_1(\mu) v_{i+1} \in T_q\mathcal{M}
	\end{equation}
	\STATE Map to manifold:
	\[
	\tilde\gamma_i(\mu) = \operatorname{Exp}_q(\tilde\Gamma_i(\mu))
	\]
	\ENDFOR
	\STATE Concatenate $ \tilde\gamma_i$ to form global $C^1$ curve $ \tilde\gamma$ \cite[Remark 2]{z20}
	\RETURN $ \tilde\gamma$
\end{algorithmic}
\end{algorithm}

\begin{remark}
The practical computation of $\hat{v}_p = d(\operatorname{Log}_q)_p(v_p)$ in Algorithm~\ref{alg:hermite-interp} can be performed numerically via finite differences as
\[
\hat{v}_p = \frac{(\operatorname{Log}_q \circ \operatorname{Exp}_p)(h v_p) - (\operatorname{Log}_q \circ \operatorname{Exp}_p)(-h v_p)}{2h} + \mathcal{O}(h^2),
\]
where $h>0$ is a small step size.  Assume that $p\in\mathcal I_q$. Since $\mathcal I_q$ is open,
$\operatorname{Exp}_p$ is continuous, and $\operatorname{Exp}_p(0)=p$, there exists a
neighborhood $V$ of $0\in T_p\mathcal M$ such that
$\operatorname{Exp}_p(V)\subset\mathcal I_q$.
Consequently, $\operatorname{Log}_q\circ\operatorname{Exp}_p:V\to T_q\mathcal M$
is smooth and
its derivative at $0$ is $d(\operatorname{Log}_q)_p$. 	
The resulting interpolant is globally $C^1$ by construction, as the derivative at the junction $\mu_{i+1}$ from the left coincides with the prescribed velocity $v_{i+1}$ (by the choice $\hat{v}_{i+1} = v_{i+1}$ in the subsequent interval). For further details, see \cite[Remark 2]{z20}.
\end{remark}

Now, we show that Algorithm~\ref{alg:mpfa} constructs an approximation of the Pareto front for the bi-objective optimization problem on Riemannian manifolds~\eqref{bo} under some assumptions.
First, the existence of solutions for the scalarization problem is presented. 

\begin{lemma}\label{rel_sol}
Let $\mu>0$ and $\bar{x} \in \mathcal{M}$ be a global minimizer of \eqref{sp}. Then, $\bar{x}$ is a properly efficient solution of \eqref{bo}.
\end{lemma}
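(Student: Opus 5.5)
The plan is to reduce the statement directly to Lemma~\ref{sp_pes}, taking $p=2$, feasible set $X=\mathcal{M}$, and weights $\mu_1=1$, $\mu_2=\mu>0$. Then the weighted-sum problem $\min_{x\in X}\sum_{k=1}^{2}\mu_k f_k(x)$ is exactly $\min_{x\in\mathcal M}\varphi_\mu(x)$ from \eqref{sp}. The vector problem $\min_{x\in X}(f_1(x),f_2(x))$ is exactly \eqref{bo}. Since $\bar x$ is a global minimizer of \eqref{sp}, it is an optimal solution of the weighted-sum problem, and Lemma~\ref{sp_pes} yields that $\bar x$ is properly efficient for \eqref{bo}.

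The one point to check is that Lemma~\ref{sp_pes}, which rests on Geoffrion's theorem, uses no linear or convex structure of $X$. I would verify this with a short self-contained argument, so the proof does not rely on the Euclidean setting. For efficiency, suppose some $x\in\mathcal M$ had $F(x)\preceq F(\bar x)$ and $F(x)\neq F(\bar x)$. Because both weights are strictly positive, $\varphi_\mu(x)<\varphi_\mu(\bar x)$, which contradicts global minimality.

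For proper efficiency I would take $T=\max\{\mu,1/\mu\}$. Suppose $f_1(x)<f_1(\bar x)$. Then $f_1(\bar x)-f_1(x)\le \mu\,(f_2(x)-f_2(\bar x))$ by minimality, which forces $f_2(x)>f_2(\bar x)$, so $j=2$ works and the ratio is at most $\mu$. The case $f_2(x)<f_2(\bar x)$ is symmetric: it gives $j=1$ and ratio at most $1/\mu$.

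These steps are only inequality manipulations and use nothing about the manifold beyond $\bar x$ being a global minimizer over $\mathcal M$. I therefore expect no real obstacle. The only care needed is to use global rather than merely local minimality, since a local minimizer of \eqref{sp} need not be properly efficient on all of $\mathcal M$.
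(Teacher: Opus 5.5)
Your proposal is correct and takes the same route as the paper, which proves the lemma by the one-line reduction to Lemma~\ref{sp_pes} with weights $\mu_1=1$, $\mu_2=\mu$ on $X=\mathcal M$. Your extra self-contained check with $T=\max\{\mu,1/\mu\}$ is also correct: it is Geoffrion's argument (which the paper cites) written out for $p=2$, and it confirms that no linear or convex structure of $X$ is needed.
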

\begin{proof}
The result follows from Lemma \ref{sp_pes}, taking $\mu_1=1$, $\mu_2=\mu$.
\end{proof}

Define 
\begin{align}
S_{\rm val}(\mu)&:=\min_{x\in \mathcal{M} } \varphi_\mu (x), \label{val}\\
S_{\rm opt}(\mu)&:=\{x\in \mathcal{M}| \varphi_\mu (x)=S_{\rm val}(\mu) \}. \label{val}
\end{align}

The following property is the counterpart of \cite[Theorem 3B.5, Example 3B.6]{dr09} for optimization over a compact Riemannian manifold, and it is useful in the theoretical analysis of Algorithm \ref{alg:mpfa}.

\begin{lemma}
Let  $\mathcal{M}$ be a compact Riemannian manifold and $f_1, f_2: \mathcal{M} \rightarrow \mathbb{R}$ be continuous functions. Then:

\item{\bf (i)} $\mu\mapsto 	S_{\rm val}(\mu)$ is continuous on $(0,\infty)$. 

\item{\bf (ii)} $\mu\mapsto 	S_{\rm opt}(\mu)$ is outer semicontinuous on $(0,\infty)$.
\end{lemma}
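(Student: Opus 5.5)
Since $\mathcal{M}$ is compact and $f_1,f_2$ are continuous, the values $\|f_1\|_\infty:=\max_{\mathcal M}|f_1|$ and $\|f_2\|_\infty:=\max_{\mathcal M}|f_2|$ are finite. The whole argument rests on this uniform bound together with the fact that $\varphi_\mu(x)$ is affine in $\mu$ for each fixed $x$.

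For (i), fix $\mu,\mu'>0$. For every $x\in\mathcal M$,
\[
|\varphi_\mu(x)-\varphi_{\mu'}(x)| = |\mu-\mu'|\,|f_2(x)| \le |\mu-\mu'|\,\|f_2\|_\infty .
\]
Hence $\varphi_{\mu}(x)\le \varphi_{\mu'}(x)+|\mu-\mu'|\|f_2\|_\infty$. Taking the minimum over $x$ gives $S_{\rm val}(\mu)\le S_{\rm val}(\mu')+|\mu-\mu'|\|f_2\|_\infty$, and by symmetry
\[
|S_{\rm val}(\mu)-S_{\rm val}(\mu')|\le \|f_2\|_\infty|\mu-\mu'|.
\]
So $S_{\rm val}$ is in fact Lipschitz on $(0,\infty)$. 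The minima are attained by the preceding existence lemma. Alternatively, one can note that $S_{\rm val}$ is a pointwise infimum of affine functions of $\mu$, hence concave and finite, hence continuous on the open interval.

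For (ii), recall that outer semicontinuity at $\bar\mu$ means the following: whenever $\mu_k\to\bar\mu$, $x_k\in S_{\rm opt}(\mu_k)$ and $x_k\to \bar x$, then $\bar x\in S_{\rm opt}(\bar\mu)$. Equivalently, the graph of $S_{\rm opt}$ is closed in $(0,\infty)\times\mathcal M$. Given such sequences, $x_k$ minimizes $\varphi_{\mu_k}$, so
\[
f_1(x_k)+\mu_k f_2(x_k)=S_{\rm val}(\mu_k).
\]
Joint continuity of $(\mu,x)\mapsto f_1(x)+\mu f_2(x)$ sends the left side to $\varphi_{\bar\mu}(\bar x)$. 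Part (i) sends the right side to $S_{\rm val}(\bar\mu)$. Therefore $\varphi_{\bar\mu}(\bar x)=S_{\rm val}(\bar\mu)$, that is, $\bar x\in S_{\rm opt}(\bar\mu)$.

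Compactness of $\mathcal M$ also gives local boundedness of $S_{\rm opt}$ and nonemptiness of every $S_{\rm opt}(\mu)$. This is worth remarking if one prefers the neighborhood-based definition of outer semicontinuity; the two definitions coincide here because $\mathcal M$ is a compact metric space under $\mathrm{dist}$.

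The only delicate point is which definition of outer semicontinuity is intended, namely the Rockafellar–Wets limit-based one from \cite{dr09,rw98}. I would state it explicitly and note its equivalence with graph-closedness in metric spaces. Otherwise the proof is routine and mirrors \cite[Theorem 3B.5]{dr09}, with compactness of $\mathcal M$ replacing the level-boundedness assumption used there.
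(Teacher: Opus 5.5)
Your proof is correct and follows essentially the same route as the paper. Part (i) uses the same Lipschitz estimate $|S_{\rm val}(\mu)-S_{\rm val}(\nu)|\le \max_{\mathcal M}|f_2|\,|\mu-\nu|$ (the paper evaluates $\varphi_\mu-\varphi_\nu$ at a minimizer $x_\nu$ rather than minimizing your pointwise bound, which is equivalent). Part (ii) is the same sequential closed-graph argument, combining joint continuity of $(\mu,x)\mapsto\varphi_\mu(x)$ with part (i).
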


\begin{proof}
(i) Since $\mathcal M$ is compact and $f_2$ is continuous, there exists
$\hat L>0$ such that
\[
|f_2(x)|\leq \hat L,
\qquad \forall x\in\mathcal M.
\]
For any $\mu,\nu>0$, let
$x_\mu\in S_{\rm opt}(\mu)$ and
$x_\nu\in S_{\rm opt}(\nu)$. Then,
\[
\begin{aligned}
	S_{\rm val}(\mu)-S_{\rm val}(\nu)
	&\leq \varphi_\mu(x_\nu)-\varphi_\nu(x_\nu)\\
	&=(\mu-\nu)f_2(x_\nu)
	\leq \hat L|\mu-\nu|.
\end{aligned}
\]
Similarly, we obtain
\[
S_{\rm val}(\nu)-S_{\rm val}(\mu)
\leq \hat L|\mu-\nu|.
\]
It follows that
\[
|S_{\rm val}(\mu)-S_{\rm val}(\nu)|
\leq \hat L|\mu-\nu|,
\]
and therefore $S_{\rm val}$ is continuous.

(ii) We next prove the outer semicontinuity of $S_{\rm opt}$.
Let $\mu_k\to\bar\mu$ and let
$x_k\in S_{\rm opt}(\mu_k)$ with $x_k\to\bar x$.
Since
\[
\varphi_{\mu_k}(x_k)=S_{\rm val}(\mu_k),
\]
the continuity of $\varphi_\mu(x)$ with respect to $(\mu,x)$ and the
continuity of $S_{\rm val}$ yield
\[
\varphi_{\bar\mu}(\bar x)
=
S_{\rm val}(\bar\mu).
\]
Thus,
\[
\bar x\in S_{\rm opt}(\bar\mu).
\]
Therefore, $S_{\rm opt}$ is outer semicontinuous.
\end{proof}

\begin{corollary}\label{gam_con}
Let  $\mathcal{M}$ be a compact Riemannian manifold and $f_1, f_2: \mathcal{M} \rightarrow \mathbb{R}$ be continuous functions. If $\gamma(\mu)$ defined in~\eqref{fgamma} is a singleton
for  each $\mu>0$, then the mapping $\mu \mapsto \gamma(\mu)$ is continuous.
\end{corollary}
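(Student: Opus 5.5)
The plan is to combine the outer semicontinuity of $S_{\rm opt}$ from the preceding lemma with the compactness of $\mathcal{M}$. We argue by sequences, which is legitimate because $\mathcal{M}$ is a metric space under $\mathrm{dist}$ and $(0,\infty)$ is first countable. Fix $\bar\mu>0$ and a sequence $\mu_k\to\bar\mu$ with $\mu_k>0$. The hypothesis that $\gamma(\mu)$ is a singleton for each $\mu>0$ means $S_{\rm opt}(\mu)=\{\gamma(\mu)\}$. The goal is to show $\gamma(\mu_k)\to\gamma(\bar\mu)$.

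We proceed by contradiction. Suppose $\gamma(\mu_k)\not\to\gamma(\bar\mu)$. Then there exist $\varepsilon>0$ and a subsequence, still denoted $\mu_k$, such that
\[
\mathrm{dist}(\gamma(\mu_k),\gamma(\bar\mu))\geq\varepsilon \quad \text{for all } k.
\]
Since $\mathcal{M}$ is compact, this subsequence has a further subsequence with $\gamma(\mu_{k_j})\to\bar x$ for some $\bar x\in\mathcal{M}$. By construction, $\mathrm{dist}(\bar x,\gamma(\bar\mu))\geq\varepsilon$, so $\bar x\neq\gamma(\bar\mu)$.

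Next we apply part (ii) of the preceding lemma. Along this subsequence we have $\mu_{k_j}\to\bar\mu$ and $\gamma(\mu_{k_j})\in S_{\rm opt}(\mu_{k_j})$ with $\gamma(\mu_{k_j})\to\bar x$. Outer semicontinuity then gives $\bar x\in S_{\rm opt}(\bar\mu)=\{\gamma(\bar\mu)\}$. This contradicts $\bar x\neq\gamma(\bar\mu)$, so $\gamma$ is continuous at every $\bar\mu>0$.

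No step is a real obstacle. The only point needing care is the subsequence extraction: compactness is what converts outer semicontinuity of a single-valued map into continuity. Without it, a sequence $\gamma(\mu_k)$ could escape every compact set, and outer semicontinuity alone would not force it to converge.
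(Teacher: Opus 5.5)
Your proposal is correct and follows the route the paper intends. The paper states this corollary without proof, as a direct consequence of the outer semicontinuity of $S_{\rm opt}$ in the preceding lemma, and your compactness-plus-subsequence argument (a single-valued, outer semicontinuous map into the compact space $\mathcal{M}$ is continuous) is the deduction it leaves implicit.
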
	

Next, we present a key result that gives conditions under which  Step \ref{11sol} of Algorithm \ref{alg:mpfa} is well-defined.

\begin{lemma}\label{gamma_dif}
Assume that  the following conditions hold:   

\item{\bf (i)}  $\mathcal{M}$ is a smooth and compact Riemannian manifold;

\item{\bf (ii)} $f_1, f_2: \mathcal{M} \rightarrow \mathbb{R}$ are twice continuously differentiable;

\item{\bf (iii)} for every $\mu>0$, problem \eqref{sp} has a unique solution, i.e., $\gamma(\mu)$ is a singleton;

\item{\bf (iv)} for every $\mu>0$, the Riemannian Hessian $H_\mu(\gamma(\mu)):=\operatorname{Hess}\varphi_\mu(\gamma(\mu))$ is invertible on $T_{\gamma(\mu)}\mathcal{M}$.	

Then the mapping $\mu \mapsto \gamma(\mu)$ is continuously differentiable and its derivative at $\mu$ (a tangent vector in $T_{\gamma(\mu)}\mathcal{M}$, denoted by $\dot\gamma(\mu)$) is given by $\dot\gamma(\mu)=-{(H_\mu(\gamma(\mu)))}^{-1}\operatorname{grad} f_2(\gamma(\mu))$.
\end{lemma}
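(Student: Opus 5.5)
The plan is to apply the implicit function theorem to the first-order optimality condition $\operatorname{grad}\varphi_\mu(x)=0$, working locally in charts. Then Corollary~\ref{gam_con} (continuity of $\gamma$) will identify the implicitly defined local solution branch with $\gamma$ itself.

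Fix $\bar\mu>0$ and set $\bar x=\gamma(\bar\mu)$. Since $\bar x$ is a global minimizer of the smooth function $\varphi_{\bar\mu}$ on a manifold without boundary, $\operatorname{grad}\varphi_{\bar\mu}(\bar x)=0$. Take a chart $\psi:U\to\mathbb{R}^d$ around $\bar x$ with a local frame of $T\mathcal{M}$ on $U$. Define
\[
G:(0,\infty)\times \psi(U)\to\mathbb{R}^d,\qquad G(\mu,y)=\text{coordinates of }\operatorname{grad}f_1(\psi^{-1}(y))+\mu\operatorname{grad}f_2(\psi^{-1}(y)).
\]
By assumption (ii), $G$ is $C^1$, and $G(\bar\mu,\psi(\bar x))=0$. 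At a critical point the Riemannian Hessian coincides with the differential of the gradient vector field, since the connection term multiplies the vanishing gradient. Hence $D_yG(\bar\mu,\psi(\bar x))$ is the coordinate matrix of $H_{\bar\mu}(\bar x)$ composed with $d\psi^{-1}$, which is invertible by (iv).

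The implicit function theorem then yields neighborhoods $J\ni\bar\mu$ and $W\ni\bar x$ and a unique $C^1$ map $\mu\mapsto\hat x(\mu)\in W$ with $\operatorname{grad}\varphi_\mu(\hat x(\mu))=0$. The main obstacle is showing $\hat x=\gamma$ near $\bar\mu$, because the theorem only describes critical points, not global minimizers. I resolve this with Corollary~\ref{gam_con}: by (iii) and compactness, $\gamma$ is continuous, so $\gamma(\mu)\in W$ for $\mu$ in a smaller interval $J'\subset J$. Each $\gamma(\mu)$ is a critical point of $\varphi_\mu$ lying in $W$, so uniqueness in the implicit function theorem forces $\gamma(\mu)=\hat x(\mu)$ on $J'$. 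Since $\bar\mu$ was arbitrary, $\gamma$ is $C^1$ on $(0,\infty)$.

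For the derivative formula, differentiate the identity $\operatorname{grad}\varphi_\mu(\gamma(\mu))=0$ along the curve using the Levi-Civita connection:
\[
\frac{D}{d\mu}\operatorname{grad}\varphi_\mu(\gamma(\mu))=\operatorname{Hess}\varphi_\mu(\gamma(\mu))[\dot\gamma(\mu)]+\operatorname{grad}f_2(\gamma(\mu))=0.
\]
The first term comes from $\nabla_{\dot\gamma}\operatorname{grad}\varphi_\mu$ with $\mu$ frozen. The second is the partial derivative in $\mu$, which is $\operatorname{grad}f_2$. Alternatively, in coordinates this is the implicit-function formula $\dot y=-D_yG^{-1}\partial_\mu G$. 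Invertibility of $H_\mu(\gamma(\mu))$ then gives $\dot\gamma(\mu)=-(H_\mu(\gamma(\mu)))^{-1}\operatorname{grad}f_2(\gamma(\mu))$. The same formula follows from the coordinate expression because the Christoffel term vanishes at the critical point.
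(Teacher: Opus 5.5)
Your proposal is correct and follows essentially the same route as the paper: a chart-based implicit function theorem for $\operatorname{grad}\varphi_\mu(x)=0$, with $D_zG$ identified with the Hessian because the gradient vanishes at the critical point. Like the paper, you identify the implicit branch with $\gamma$ via Corollary~\ref{gam_con} and local uniqueness, and obtain the derivative formula by covariantly differentiating $\operatorname{grad}\varphi_\mu(\gamma(\mu))=0$; the only cosmetic difference is that you write $G$ in a general local frame rather than through $d\psi_x$.
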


\begin{proof} Fix a $\bar\mu>0$ and set 
\[ \bar x:=\gamma(\bar\mu). \] 
Since $\bar x$ is a global minimizer of $\varphi_{\bar\mu}$, the first-order optimality condition gives 
\[ 
\operatorname{grad}\varphi_{\bar\mu}(\bar x)=0. 
\] 
Let $(U,\psi)$ be a smooth local coordinate chart around $\bar x$, where 
\[ \psi:U\to V\subset\mathbb R^d, \qquad d=\dim\mathcal M, \] 
and let $ \bar z:=\psi(\bar x)$. 
For $z\in V$, write 
$x=\psi^{-1}(z)$.  
We define the coordinate representation of the Riemannian gradient by 
\[ G:(0,\infty)\times V\to\mathbb R^d, \] 
\[ G(\mu,z) := d\psi_x\bigl(\operatorname{grad}\varphi_\mu(x)\bigr), \qquad x=\psi^{-1}(z). \] 
Since 
$ \varphi_\mu=f_1+\mu f_2 $
and $f_1,f_2\in C^2(\mathcal M)$, the vector field 
\[ (\mu,x)\longmapsto \operatorname{grad}\varphi_\mu(x) = \operatorname{grad} f_1(x)+\mu\operatorname{grad} f_2(x) \] 
is continuously differentiable. Since $\psi$ is smooth, it follows that $G$ is continuously differentiable. At $(\bar\mu,\bar z)$, we have 
\[ G(\bar\mu,\bar z) = d\psi_{\bar x} \bigl(\operatorname{grad}\varphi_{\bar\mu}(\bar x)\bigr) =0. \] 
We next show that the partial derivative 
\[ D_zG(\bar\mu,\bar z): \mathbb R^d\to\mathbb R^d \] is nonsingular. Let $\zeta\in\mathbb R^d$ and define \[ \xi := d\psi_{\bar x}^{-1}[\zeta] \in T_{\bar x}\mathcal M. \] 
Differentiating the local representation 
$z\longmapsto G(\mu,z)$ 
at $z=\bar z$, since 
$ \operatorname{grad}\varphi_{\bar\mu}(\bar x)=0$, 
we get
\[ D_zG(\bar\mu,\bar z)[\zeta] = d\psi_{\bar x} \left( \operatorname{Hess}\varphi_{\bar\mu}(\bar x)[\xi] \right). \] Equivalently, 
\[ D_zG(\bar\mu,\bar z) = d\psi_{\bar x} \circ \operatorname{Hess}\varphi_{\bar\mu}(\bar x) \circ d\psi_{\bar x}^{-1}. \] 
So $D_zG(\bar\mu,\bar z)$ is the coordinate representation of $\operatorname{Hess}\varphi_{\bar\mu}(\bar x)$. By assumption (iv), 
$\operatorname{Hess}\varphi_{\bar\mu}(\bar x)$
is invertible. Since $d\psi_{\bar x}$ is a linear isomorphism, it follows that 
$D_zG(\bar\mu,\bar z)$ 
is invertible. The classical implicit function theorem  \cite[Theorem~1B.1]{dr09} can be applied to 
$G(\mu,z)=0$ 
at $(\bar\mu,\bar z)$;
see, \cite[Lemmas~1.3.1-1.3.2]{j05}. 
Hence there exist an open interval $J_{\bar\mu}$ containing $\bar\mu$, an open neighborhood $W\subset V$ of $\bar z$, and a unique continuously differentiable mapping 
$z:J_{\bar\mu}\to W $ 
such that 
$ z(\bar\mu)=\bar z $ 
and 
$ G(\mu,z(\mu))=0$, for every $\mu\in J_{\bar\mu}$.  
We now identify this local implicit branch with the global solution mapping $\gamma$. By Corollary~\ref{gam_con}, the mapping $\mu\mapsto\gamma(\mu)$ is continuous. Hence, after possibly shrinking $J_{\bar\mu}$, we may assume that \[ \gamma(\mu)\in U \qquad\text{and}\qquad \psi(\gamma(\mu))\in W \] 
for every $\mu\in J_{\bar\mu}$. For every $\mu\in J_{\bar\mu}$, $\gamma(\mu)$ is the global minimizer of $\varphi_\mu$. Therefore, 
$\operatorname{grad}\varphi_\mu(\gamma(\mu))=0$ and  
$G\bigl(\mu,\psi(\gamma(\mu))\bigr)=0$.
Both $z(\mu)$ and $\psi(\gamma(\mu))$ belong to $W$ and solve 
$G(\mu,\cdot)=0$.
By the local uniqueness in the implicit function theorem, 
\[ \psi(\gamma(\mu))=z(\mu), \qquad \mu\in J_{\bar\mu}. \] 
Thus, 
\[ \gamma(\mu) = \psi^{-1}(z(\mu)), \qquad \mu\in J_{\bar\mu}. \] 
Since $z$ and $\psi^{-1}$ are continuously differentiable, $\gamma$ is continuously differentiable on $J_{\bar\mu}$. Since $\bar\mu>0$ was arbitrary, we conclude that 
\[ \gamma\in C^1((0,\infty),\mathcal M). \] 
It remains to derive the expression for $\dot\gamma(\mu)$. For every $\mu>0$, the first-order optimality condition gives 
\[ \operatorname{grad}\varphi_\mu(\gamma(\mu))=0. \] 
Consider the parameter-dependent vector field 
\[ X(\mu,x) := \operatorname{grad}\varphi_\mu(x) = \operatorname{grad} f_1(x)+\mu\operatorname{grad} f_2(x). \] 
Taking the covariant derivative of 
\[ X(\mu,\gamma(\mu))=0 \] 
with respect to $\mu$ yields 
\[ 0 = \frac{D}{d\mu} X(\mu,\gamma(\mu)). \] 
By the chain rule for a parameter-dependent vector field, 
\[ \frac{D}{d\mu} X(\mu,\gamma(\mu)) =  \frac{\partial X}{\partial\mu} (\mu,\gamma(\mu))+ \nabla_{\dot\gamma(\mu)} X(\mu,\cdot). \] The second term satisfies 
\[ \nabla_{\dot\gamma(\mu)} \operatorname{grad}\varphi_\mu = \operatorname{Hess}\varphi_\mu(\gamma(\mu)) [\dot\gamma(\mu)], \] while 
\[ \frac{\partial X}{\partial\mu} (\mu,\gamma(\mu)) = \operatorname{grad} f_2(\gamma(\mu)). \] 
Therefore, \[ \operatorname{Hess}\varphi_\mu(\gamma(\mu)) [\dot\gamma(\mu)] + \operatorname{grad} f_2(\gamma(\mu)) = 0. \] Finally, assumption (iv) implies that $\operatorname{Hess}\varphi_\mu(\gamma(\mu))$ is invertible, and hence \[  \dot\gamma(\mu) = - \bigl(\operatorname{Hess}\varphi_\mu(\gamma(\mu))\bigr)^{-1} \operatorname{grad} f_2(\gamma(\mu)).  \]
This completes the proof. 
\end{proof}

\section{Error and Convergence Analysis}\label{sec4}

In  Algorithm \ref{alg:mpfa}, a finite set of parameter values $\{ \mu_i \}_{i=1}^N$ is chosen, and corresponding Pareto critical points $\gamma(\mu_i)$ are computed. A $C^1$ curve $\tilde{\gamma}$ on $\mathcal{M}$ is then constructed by Hermite interpolation between these critical points (anchor points), using also velocity vectors $v_i = \dot{\gamma}(\mu_i)$. 
A natural and important question is: How close is the interpolating curve $\tilde{\gamma}$ to the true Pareto curve $\gamma$? 

In this section, we provide an upper bound for the uniform approximate error in terms of the maximum spacing between the anchor points.
First, on each interval $I_i=[\mu_i,\mu_{i+1}]$, we pull back the true Pareto curve $\gamma$
to the tangent space $T_q\mathcal M$ by means of the logarithmic map at the base point
$q=\gamma(\mu_{i+1})$, and define $\Gamma(\mu)=\mathrm{Log}_q(\gamma(\mu))$.
This transforms the approximation of $\gamma$ into a classical interpolation problem in a linear space.
Using the uniform continuity of $\dot \Gamma_i$, we derive a local error estimate for the cubic Hermite interpolation in the tangent space in terms of the modulus of continuity of $\dot \Gamma_i$.
Second, we transfer this estimate to the manifold by applying Theorem~1 of \cite{j25}.

We transfer the tangent-space error back to the manifold by the exponential map.
Combining both steps yields a global uniform bound of the form
$\Delta_{\max}$ and $\eta$, such that
$$
\sup_{\mu}
\mathrm{dist}\bigl(\gamma(\mu),\tilde{\gamma}(\mu)\bigr)=
\mathcal{O}\!\left(
\Delta_{\max}\omega(\Delta_{\max})
+\eta\Delta_{\max}
\right),$$
which further implies convergence of the approximate Pareto front in the objective space.

Partition $\mu_0<\mu_1<\cdots<\mu_N$ with $\Delta_i \coloneqq \mu_{i+1}-\mu_i$ and
\[
\Delta_{\max} \coloneqq \max_{1 \leq i \leq N-1} \Delta_i.
\]

\begin{lemma} \label{lem:Gamma-regularity}
Assume that the conditions of Lemma \ref{gamma_dif} hold.   
For each interpolation interval $I_i=[\mu_i,\mu_{i+1}]$, choose the basepoint 
\[
q := \gamma(\mu_{i+1}),
\]
and define the mapping $\Gamma(\mu) := \mathrm{Log}_q(\gamma(\mu))$ for $\mu\in I_i$. 
Then, for sufficiently small $\Delta_{\max}$, the following statements hold:

\item{\bf (i)}
The mapping $\Gamma$ belongs to $C^1(I_i,T_q\mathcal M)$, and its derivative satisfies
\[
\dot\Gamma(\mu) = D(\mathrm{Log}_q)(\gamma(\mu))\,[\dot\gamma(\mu)].
\]

\item{\bf (ii)}
$ \dot\Gamma$ is uniformly continuous on the interval $I_i$, 
\begin{equation*}\label{omega_Gam}
	\omega_{ \dot\Gamma}(t)
	:=\sup_{\substack{\mu,\nu\in[\mu_{\min},\mu_{\max}]\\ |\mu-\nu|\le t}}
	\| \dot\Gamma(\mu)- \dot\Gamma(\nu)\|_q
\end{equation*}
is finite for all $t\ge 0$, and satisfies
$
\lim_{t\downarrow 0} \omega_{ \dot\Gamma}(t) = 0$.

\end{lemma}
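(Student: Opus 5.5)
The plan is to first make sure the logarithm is smooth on a neighbourhood containing the whole arc $\gamma(I_i)$, and then obtain both claims from the chain rule and compactness.

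\emph{Step 1: where $\mathrm{Log}_q$ is smooth.} By Lemma~\ref{gamma_dif}, $\gamma\in C^1((0,\infty),\mathcal M)$, so $\gamma$ is uniformly continuous on the compact interval $[\mu_{\min},\mu_{\max}]$. Since $\mathcal M$ is compact, its injectivity radius $r_{\rm inj}>0$. Choose $\delta>0$ such that $|\mu-\nu|\le\delta$ implies $\mathrm{dist}(\gamma(\mu),\gamma(\nu))<r_{\rm inj}/2$, and assume $\Delta_{\max}\le\delta$. Then for every $\mu\in I_i$ the point $\gamma(\mu)$ lies in the open geodesic ball $B(q,r_{\rm inj}/2)$, where $q=\gamma(\mu_{i+1})$. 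On $B(q,r_{\rm inj})\subset\mathcal I_q$ the map $\mathrm{Exp}_q$ is a diffeomorphism from the tangent ball of radius $r_{\rm inj}$ onto the geodesic ball. Hence $\mathrm{Log}_q$ is $C^\infty$ there.

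\emph{Step 2: claim (i).} On $I_i$ we have $\Gamma=\mathrm{Log}_q\circ\gamma$, a composition of a $C^\infty$ map with a $C^1$ curve. It is therefore $C^1$, and the chain rule gives $\dot\Gamma(\mu)=D(\mathrm{Log}_q)(\gamma(\mu))[\dot\gamma(\mu)]$.

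\emph{Step 3: claim (ii).} The map $\dot\Gamma$ is continuous on the compact interval $I_i$, so it is uniformly continuous there by Heine--Cantor. The modulus $\omega_{\dot\Gamma}(t)$ is then finite, being bounded by $2\max_{I_i}\|\dot\Gamma\|_q<\infty$, and it tends to $0$ as $t\downarrow0$ by uniform continuity.

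\emph{Main obstacle: the range in $\omega_{\dot\Gamma}$.} The definition takes the supremum over $\mu,\nu\in[\mu_{\min},\mu_{\max}]$, but $\Gamma$ depends on the basepoint $q$ and is only guaranteed to be defined on $I_i$. I would read the supremum as being over $I_i$. For a bound uniform in $i$, I would work with the map $(q,y)\mapsto D(\mathrm{Log}_q)(y)$. This map is continuous, hence uniformly continuous, on the compact set $\{(q,y):\mathrm{dist}(q,y)\le r_{\rm inj}/2\}$, where we compare $D(\mathrm{Log}_q)(y)$ and $D(\mathrm{Log}_q)(y')$ at the same $q$ as linear maps into the single space $T_q\mathcal M$, measuring them in operator norm. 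The map $\dot\gamma$ is uniformly continuous on $[\mu_{\min},\mu_{\max}]$, and its norm is bounded by compactness.

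Write $A_y:=D(\mathrm{Log}_q)(y)$. The difference $\dot\Gamma(\mu)-\dot\Gamma(\nu)$ then splits as
\[
\bigl(A_{\gamma(\mu)}-A_{\gamma(\nu)}\bigr)[\dot\gamma(\mu)]+A_{\gamma(\nu)}\bigl[\dot\gamma(\mu)-\mathcal T\dot\gamma(\nu)\bigr],
\]
where $\mathcal T$ is parallel transport along the minimizing geodesic from $\gamma(\nu)$ to $\gamma(\mu)$. Here $A_{\gamma(\mu)}$ acts on $\dot\gamma(\mu)$, the operator difference is formed after pulling $A_{\gamma(\nu)}$ back by $\mathcal T^{-1}$, and the second term is $A_{\gamma(\nu)}$ applied to $\dot\gamma(\mu)-\mathcal T\dot\gamma(\nu)$ after transport back by $\mathcal T^{-1}$. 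Each piece tends to $0$ uniformly in $i$ as $|\mu-\nu|\to0$, which yields a common modulus for all intervals.
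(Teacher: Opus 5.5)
Your Steps 1--3 are essentially the paper's proof: smoothness of $\mathrm{Log}_q$ on $\mathcal I_q$ plus the chain rule gives (i), and Heine--Cantor on the compact interval $I_i$ gives (ii), with the supremum in $\omega_{\dot\Gamma}$ taken over $I_i$, exactly as the paper's proof silently does. Your injectivity-radius argument makes explicit the paper's bare assertion that $\gamma(I_i)\subset\mathcal I_q$ once $\Delta_{\max}$ is small; your parallel-transport decomposition, which yields a modulus uniform in $i$, goes beyond the lemma and the paper's proof, but it is what Theorem~\ref{herror} implicitly relies on when it lets $\omega(\Delta_{\max})\to 0$ under refinement.
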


\begin{proof}
We prove each item in turn.

\item{\bf (i)} In the interior set $\mathcal{I}_q$, the map $\mathrm{Log}_q$ is $C^\infty$.  
Since
$\gamma\in C^1(I_i,\mathcal M)$ by 
Lemma~\ref{gamma_dif} and there exists $\Delta_{\max}>0$ such that
$
\gamma(I_i)\subset \mathcal{I}_q
$. Then $\Gamma=\mathrm{Log}_q\circ\gamma$ belongs to $C^1(I_i,T_q\mathcal M)$ by the
chain rule. Moreover, for every $\mu\in I_i$, the derivative is given by the
chain rule as
\[
\dot\Gamma(\mu)
= D(\mathrm{Log}_q)\big(\gamma(\mu)\big)\big[\dot\gamma(\mu)\big],
\]
where $D(\mathrm{Log}_q)(\gamma(\mu)) : T_{\gamma(\mu)}\mathcal M \to T_q\mathcal M$
denotes the differential of $\mathrm{Log}_q$ at the point $\gamma(\mu)$.

\item{\bf (ii)} The map $ \dot\Gamma$ is continuous by (a),  and the interval $I_i$ is compact. Hence $\Gamma'$ is uniformly
continuous on $I_i$. 
Therefore, the function
\[
\omega_{\dot\Gamma}(t)
:=\sup_{\substack{\mu,\nu\in I_i\\|\mu-\nu|\le t}}
\|\dot\Gamma(\mu)-\dot\Gamma(\nu)\|_q
\]
is finite for every $t\ge0$, and satisfies
$\lim_{t\downarrow0}\omega_{\dot\Gamma}(t)=0$, which is exactly the assertion in (b).
\end{proof}

\begin{theorem}\label{herror}
Assume that the conditions of Lemma~\ref{lem:Gamma-regularity} hold.
For each $i$, let $I_i=[\mu_i,\mu_{i+1}]$, $
q_i:=\gamma(\mu_{i+1})$,
$\Gamma_i(\mu):=\mathrm{Log}_{q_i}(\gamma(\mu))$,
$\mu\in I_i$.
Let $\tilde{\Gamma}_i:I_i\to T_{q_i}\mathcal M$ be the cubic
Hermite interpolant used in Algorithm~\ref{alg:hermite-interp}, and define $\tilde{\gamma}_i(\mu)
:=
\mathrm{Exp}_{q_i}\bigl(\tilde{\Gamma}_i(\mu)\bigr)$.
Suppose that the endpoint tangent data used in constructing
$\tilde{\Gamma}_i$ satisfy
$
\|\widehat v_i^L-\dot\Gamma_i(\mu_i)\|_{q_i}\leq\eta$,
$\|\widehat v_i^R-\dot\Gamma_i(\mu_{i+1})\|_{q_i}\leq\eta$,
for some $\eta\geq0$. Define
$
\omega(t)
:=
\max_{0\leq i\leq N-1}
\sup_{\substack{\mu,\nu\in I_i\\|\mu-\nu|\leq t}}
\|\dot\Gamma_i(\mu)-\dot\Gamma_i(\nu)\|_{q_i}
$. Then, for sufficiently small $\Delta_{\max}$ and $\eta$,
there exists a constant $C>0$, independent of the partition,
$\Delta_{\max}$ and $\eta$, such that
\[
\sup_{\mu\in[\mu_{\min},\mu_{\max}]}
\mathrm{dist}\bigl(\gamma(\mu),\tilde{\gamma}(\mu)\bigr)
\leq
C\left[
\Delta_{\max}\omega(\Delta_{\max})
+
\eta\Delta_{\max}
\right].	
\]
In particular, if the exact endpoint velocities are used, i.e.,
$\eta=0$, then
\[
\sup_{\mu\in[\mu_{\min},\mu_{\max}]}
\mathrm{dist}\bigl(\gamma(\mu),\tilde{\gamma}(\mu)\bigr)
\leq
C\Delta_{\max}\omega(\Delta_{\max}).
\]
\end{theorem}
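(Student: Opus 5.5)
The plan is to work interval by interval. We first bound the error of the cubic Hermite interpolant in the linear space $T_{q_i}\mathcal M$, and then transfer that bound to the manifold through $\mathrm{Exp}_{q_i}$.

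Fix $i$ and write $h=\Delta_i$. By Lemma~\ref{lem:Gamma-regularity}, $\Gamma_i\in C^1(I_i,T_{q_i}\mathcal M)$. Because $q_i=\gamma(\mu_{i+1})$, we have $\Gamma_i(\mu_{i+1})=0$. Algorithm~\ref{alg:hermite-interp} uses the exact value $\Gamma_i(\mu_i)=\mathrm{Log}_{q_i}(\gamma(\mu_i))$ at the left endpoint. Hence $\tilde\Gamma_i$ interpolates $\Gamma_i$ exactly at both endpoints, and only the slopes are perturbed, by at most $\eta$.

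We split $\tilde\Gamma_i-\Gamma_i=(\tilde\Gamma_i-H_i)+(H_i-\Gamma_i)$, where $H_i$ is the cubic Hermite interpolant built with the exact slopes $\dot\Gamma_i(\mu_i)$ and $\dot\Gamma_i(\mu_{i+1})$. The first difference equals $b_0(\mu)(\widehat v_i^L-\dot\Gamma_i(\mu_i))+b_1(\mu)(\widehat v_i^R-\dot\Gamma_i(\mu_{i+1}))$. The derivative basis functions satisfy $|b_0|,|b_1|\le h\cdot\tfrac{4}{27}$ after scaling, so this term is bounded by $C\eta h$.

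For the second difference, set $e=H_i-\Gamma_i$. Then $e$ vanishes at both endpoints, so for $\mu\in I_i$ we have $\|e(\mu)\|\le\int_{\mu_i}^{\mu}\|\dot e(s)\|\,ds\le h\sup_{I_i}\|\dot e\|$. It therefore suffices to show $\sup\|\dot e\|\le C\omega(h)$. To see this, subtract the linear function $L(\mu)=\Gamma_i(\mu_i)+(\mu-\mu_i)s$, where $s=(\Gamma_i(\mu_{i+1})-\Gamma_i(\mu_i))/h$ is the secant slope. Hermite interpolation reproduces linear functions, so $e=H[\Gamma_i-L]-(\Gamma_i-L)$.

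The function $g=\Gamma_i-L$ vanishes at both endpoints. By the mean value inequality in integral form, $s$ is an average of $\dot\Gamma_i$ over $I_i$, so $\|\dot g\|=\|\dot\Gamma_i-s\|\le\omega(h)$ on $I_i$. Then $H[g]=b_0\dot g(\mu_i)+b_1\dot g(\mu_{i+1})$, and $|\dot b_0|,|\dot b_1|\le C$ uniformly in $h$ because the basis functions are scaled. This gives $\|\dot H[g]\|\le C\omega(h)$, and hence $\|\dot e\|\le C'\omega(h)$. Combining the two pieces,
\[
\sup_{\mu\in I_i}\|\tilde\Gamma_i(\mu)-\Gamma_i(\mu)\|_{q_i}\le C\bigl(h\,\omega(h)+\eta h\bigr).
\]

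To transfer this bound to the manifold, note that $\gamma(\mu)=\mathrm{Exp}_{q_i}(\Gamma_i(\mu))$ and $\tilde\gamma(\mu)=\mathrm{Exp}_{q_i}(\tilde\Gamma_i(\mu))$. We then invoke Theorem~1 of \cite{j25}, or directly the fact that $\mathrm{Exp}_{q}$ is Lipschitz on a compact ball, to get $\mathrm{dist}(\gamma(\mu),\tilde\gamma(\mu))\le L\|\Gamma_i(\mu)-\tilde\Gamma_i(\mu)\|$.

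The main obstacle is making $L$ uniform in $i$ and in the partition. Since $\gamma$ is continuous on the compact interval $[\mu_{\min},\mu_{\max}]$, we have $\|\Gamma_i\|\le\mathrm{dist}(\gamma(\mu),\gamma(\mu_{i+1}))\to0$ uniformly as $\Delta_{\max}\to0$. Together with the interpolation bound, this keeps both $\Gamma_i$ and $\tilde\Gamma_i$, for $\Delta_{\max}$ and $\eta$ small, inside the ball of radius $r$ in $T_q\mathcal M$, for some $r$ smaller than half the injectivity radius of the compact $\mathcal M$. On this union of balls, compactness of the ball bundle $\{(q,v):\|v\|\le r\}$ gives a uniform bound on $d\,\mathrm{Exp}$, and hence a uniform $L$. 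The same compactness argument gives a uniform bound on $d\,\mathrm{Log}$, which justifies the uniform-in-$i$ continuity encoded in $\omega$.

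Taking the supremum over $i$ and using $h\le\Delta_{\max}$ together with the monotonicity of $\omega$ yields the stated bound. Setting $\eta=0$ gives the particular case.
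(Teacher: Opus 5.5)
Your proof is correct and follows essentially the same route as the paper. You use the same splitting $\tilde\Gamma_i-\Gamma_i=(\tilde\Gamma_i-H_i\Gamma_i)+(H_i\Gamma_i-\Gamma_i)$, the same reproduction of affine functions to bound the exact Hermite error by $O(h\,\omega(h))$, the same $O(\eta h)$ slope-perturbation estimate, and the same transfer through $\mathrm{Exp}_{q_i}$ on a common small ball. The differences are minor: you subtract the secant line and bound $\dot e$ before integrating, whereas the paper subtracts the tangent line $\Gamma_i(\mu_i)+(\mu-\mu_i)\dot\Gamma_i(\mu_i)$ and bounds values directly; and your uniform Lipschitz bound for $\mathrm{Exp}$ on the compact disk bundle is more elementary than the paper's appeal to \cite[Theorem~1]{j25}, and makes the partition-independence of $C$ more explicit.
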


\begin{proof}
By Lemma~\ref{lem:Gamma-regularity}, for all sufficiently small
$\Delta_{\max}$, the mapping
\[
\Gamma_i(\mu)=\mathrm{Log}_{q_i}(\gamma(\mu)),
\qquad \mu\in I_i,
\]
is well defined and belongs to
$C^1(I_i,T_{q_i}\mathcal M)$ for every $i$.
Moreover, the family $\{\dot \Gamma_i\}$ is uniformly continuous in the
sense that
$
\lim_{t\downarrow 0} \omega_{ \dot\Gamma}(t) = 0$.

First, we apply a classical analysis~\cite[Section~2]{bs68} of cubic Hermite interpolants to estimate the interpolation error in the tangent space.
Fix $i$ and write
\[
a=\mu_i,\qquad
b=\mu_{i+1},\qquad
h=b-a=\Delta_i.
\]	
Let $H_i\Gamma_i$ denote the cubic Hermite interpolant constructed
from the exact endpoint values
$\Gamma_i(a),\, \Gamma_i(b)$,
and the exact endpoint derivatives
$\dot \Gamma_i(a),\, \dot \Gamma_i(b)$.
Define the affine function
\[
\ell_i(\mu)
:=
\Gamma_i(a)+(\mu-a)\dot \Gamma_i(a)
\]
and set
\[
r_i(\mu):=\Gamma_i(\mu)-\ell_i(\mu).
\]
Since cubic Hermite interpolation reproduces affine functions,
\[
\Gamma_i-H_i\Gamma_i
=
r_i-H_ir_i.
\]

For every $\mu\in I_i$,
\[
\begin{aligned}
r_i(\mu)
&=
\Gamma_i(\mu)-\Gamma_i(a)
-(\mu-a)\dot \Gamma_i(a)\\
&=
\int_a^\mu
\left(\dot \Gamma_i(s)-\dot\Gamma_i(a)\right)\,ds.
\end{aligned}
\]
Hence,
\[
\|r_i(\mu)\|_{q_i}
\leq
h\,\omega(h).
\]
Furthermore,
$r_i(a)=0$,
$\dot r_i(a)=0$,
$\|r_i(b)\|_{q_i}\leq h\omega(h)$, and $\|\dot r_i(b)\|_{q_i}\leq\omega(h)$.

Let
\[
\theta=\frac{\mu-a}{h}\in[0,1].
\]
We obtain the cubic Hermite interpolant
\begin{align*}
H_ir_i(\mu)
&=h_{00}(\theta)r_i(a)+
h\,h_{10}(\theta)\dot r_i(a)+
h_{01}(\theta)r_i(b)+
h\,h_{11}(\theta)\dot r_i(b)\\
&=h_{01}(\theta)r_i(b)+
h\,h_{11}(\theta)\dot r_i(b),
\end{align*}
where
$h_{00}(\theta)=2\theta^3-3\theta^2+1$,
$h_{10}(\theta)=\theta^3-2\theta^2+\theta$,
$h_{01}(\theta)=-2\theta^3+3\theta^2$,
$h_{11}(\theta)=\theta^3-\theta^2$.
Since $
|h_{01}(\theta)|\leq1, |h_{11}(\theta)|\leq1, \theta\in[0,1]$,
we obtain
\[
\|H_ir_i(\mu)\|_{q_i}
\leq2h\omega(h).
\]
Consequently,
\[
\begin{aligned}
\|\Gamma_i(\mu)-H_i\Gamma_i(\mu)\|_{q_i}
=
\|r_i(\mu)-H_ir_i(\mu)\|_{q_i}
\leq
\|r_i(\mu)\|_{q_i}+\|H_ir_i(\mu)\|_{q_i}
\leq
3h\omega(h).
\end{aligned}
\]
Thus,
\begin{equation}
\label{eq:exactHermite}
\|\Gamma_i(\mu)-H_i\Gamma_i(\mu)\|_{q_i}
\leq
3\Delta_i\omega(\Delta_i).
\end{equation}

We next account for the possible error in the endpoint tangent data.
The interpolants $H_i\Gamma_i$ and $\tilde{\Gamma}_i$ have the
same endpoint values and differ only in their endpoint derivatives.
Hence, by the cubic Hermite representation,
\[
\begin{aligned}
H_i\Gamma_i(\mu)-\tilde{\Gamma}_i(\mu)
&=
h h_{10}(\theta)
\bigl(\dot \Gamma_i(a)-\widehat v_i^L\bigr)+
h h_{11}(\theta)
\bigl(\dot \Gamma_i(b)-\widehat v_i^R\bigr).
\end{aligned}
\]
Using
$
|h_{10}(\theta)|\leq1$,
$
|h_{11}(\theta)|\leq1$,
and the assumed endpoint derivative error bounds, we obtain
\[
\|H_i\Gamma_i(\mu)-\tilde{\Gamma}_i(\mu)\|_{q_i}
\leq2\eta\Delta_i.
\]
Combining this estimate with \eqref{eq:exactHermite} yields
\begin{equation}
\label{eq:tang-error}
\|\Gamma_i(\mu)-\tilde{\Gamma}_i(\mu)\|_{q_i}
\leq
3\Delta_i\omega(\Delta_i)+2\eta\Delta_i.
\end{equation}

We next verify that both $\Gamma_i$ and
$\tilde{\Gamma}_i$ remain in a common tangent normal-coordinate
domain. Since $\gamma\in C^1([\mu_{\min},\mu_{\max}],\mathcal M)$,
there exists $L_\gamma>0$ such that
\[
\|\dot\gamma(\mu)\|
\leq L_\gamma,
\qquad
\mu\in[\mu_{\min},\mu_{\max}].
\]
For $\mu\in I_i$,
\[
\begin{aligned}
\mathrm{dist}(q_i,\gamma(\mu))
&\leq
\int_\mu^{\mu_{i+1}}
\|\dot\gamma(s)\|\,ds\\
&\leq
L_\gamma\Delta_i.
\end{aligned}
\]
Since $\gamma(I_i)$ lies in the interior set of $q_i$ for sufficiently
small $\Delta_{\max}$, we have
\[
\|\Gamma_i(\mu)\|_{q_i}
=
\mathrm{dist}(q_i,\gamma(\mu))
\leq L_\gamma\Delta_i.
\]
It follows from \eqref{eq:tang-error} that
\[
\begin{aligned}
\|\tilde{\Gamma}_i(\mu)\|_{q_i}
&\leq
\|\Gamma_i(\mu)\|_{q_i}
+
\|\Gamma_i(\mu)-\tilde{\Gamma}_i(\mu)\|_{q_i}\\
&\leq
L_\gamma\Delta_i
+
3\Delta_i\omega(\Delta_i)
+
2\eta\Delta_i.
\end{aligned}
\]
Since $\omega(t)\to0$ as $t\downarrow0$,
the right-hand side tends uniformly to zero as
$\Delta_{\max}\to0$ and $\eta\to0$.

Because $\mathcal M$ is compact, one may choose a radius $\rho>0$,
independent of $q\in\mathcal M$, such that for every $q\in\mathcal M$
the ball $B_{T_q\mathcal M}(0,\rho)$
is contained in a tangent normal-coordinate domain of $q$.
Therefore, for sufficiently small $\Delta_{\max}$ and $\eta$,
\[
\Gamma_i(I_i)\cup\tilde{\Gamma}_i(I_i)
\subset
B_{T_{q_i}\mathcal M}(0,\rho)
\]
for every $i$. In particular,
\[
\mathrm{Log}_{q_i}(\tilde{\gamma}_i(\mu))
=
\tilde{\Gamma}_i(\mu).
\]

We now transfer the tangent-space error to the manifold.
Since $\mathcal M$ is compact, its sectional curvature is bounded
from below, thus there exists $H\in\mathbb R$ such that $K(\sigma)\geq H$
for every tangent two-plane $\sigma$ of $\mathcal M$. Set
\[
\varepsilon_i
:=
\sup_{\mu\in I_i}
\|\Gamma_i(\mu)-\tilde{\Gamma}_i(\mu)\|_{q_i}.
\]
By \eqref{eq:tang-error},
\[
\varepsilon_i
\leq
3\Delta_i\omega(\Delta_i)
+
2\eta\Delta_i.
\]
The error-transfer estimate of
\cite[Theorem~1]{j25}
can now be applied on the common normal-coordinate domain with $R=I_i$,
$f=\gamma|_{I_i}$,
$g=\Gamma_i$,
$\widehat g=\tilde{\Gamma}_i$,
$\widehat f=\tilde{\gamma}_i$.
It follows that there exists a constant $C_1>0$, 
such that
\[
\mathrm{dist}\bigl(\gamma(\mu),\tilde{\gamma}_i(\mu)\bigr)
\leq C_1\varepsilon_i,
\qquad
\mu\in I_i.
\]
Therefore,
\[
\begin{aligned}
\mathrm{dist}\bigl(\gamma(\mu),\tilde{\gamma}_i(\mu)\bigr)
&\leq
C_1
\left[
3\Delta_i\omega(\Delta_i)
+
2\eta\Delta_i
\right]\\
&\leq
C_1
\left[
3\Delta_{\max}\omega(\Delta_{\max})
+
2\eta\Delta_{\max}
\right].
\end{aligned}
\]     
We take the supremum over
all interpolation intervals:
\[
\sup_{\mu\in[\mu_{\min},\mu_{\max}]}
\mathrm{dist}\bigl(\gamma(\mu),\tilde{\gamma}(\mu)\bigr)
\leq
C
\left[
\Delta_{\max}\omega(\Delta_{\max})
+
\eta\Delta_{\max}
\right]
\]
for a constant $C>0$ independent of the partition.

If the exact endpoint velocities are used, then $\eta=0$, and hence
\[
\sup_{\mu\in[\mu_{\min},\mu_{\max}]}
\mathrm{dist}\bigl(\gamma(\mu),\tilde{\gamma}(\mu)\bigr)
\leq
C\Delta_{\max}\omega(\Delta_{\max}).
\]
\end{proof}

Moreover, we say that the proposed method (Algorithm \ref{alg:mpfa}) is \emph{convergent} in the objective space 
if the approximate Pareto front curve
$\bigl(f_1(\tilde{\gamma}(\mu)),f_2(\tilde{\gamma}(\mu))\bigr)$ converges to  $\bigl(f_1(\gamma(\mu)),f_2(\gamma(\mu))\bigr)$ when $\Delta_{\max}\to 0$, namely,
\[
\lim_{\Delta_{\max}\to 0}
\sup_{\mu\in[\mu_{\min},\mu_{\max}]}
\left\|
\bigl(f_1(\gamma(\mu)),f_2(\gamma(\mu))\bigr)
-
\bigl(f_1(\tilde{\gamma}(\mu)),f_2(\tilde{\gamma}(\mu))\bigr)
\right\|
=0.
\] 
The convergence of Algorithm \ref{alg:mpfa} is proved in the next theorem.

\begin{theorem} \label{thm:mpfa-convergence}
Assume that conditions of Theorem \ref{herror} hold.
Let the objective functions
$f_1,f_2$ be Lipschitz continuous with Lipschitz constants $L_{f_1},L_{f_2}>0$.
Then 
\begin{equation}\label{eq:mpfa-convergence}
	\begin{split}
		&\sup_{\mu\in[\mu_{\min},\mu_{\max}]}\big\|
		(f_1(\gamma(\mu)),f_2(\gamma(\mu))) - 
		(f_1(\tilde\gamma(\mu)),f_2(\tilde\gamma(\mu)))\big\| \\
		\le& \sqrt{L_{f_1}^2 + L_{f_2}^2} C \left( \Delta_{\max}\omega(\Delta_{\max}) + \eta\Delta_{\max} \right),
	\end{split}
\end{equation}
where  $\omega(\cdot)$ is defined in Lemma 
\ref{lem:Gamma-regularity}.

In particular, as $\Delta_{\max}\to 0$, $(f_1(\tilde\gamma(\mu)),f_2(\tilde\gamma(\mu)))$ generated by  Algorithm \ref{alg:mpfa} converges to the true Pareto front.
\end{theorem}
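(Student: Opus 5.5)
The plan is to compose the manifold error bound of Theorem~\ref{herror} with the Lipschitz continuity of the objectives. Throughout, Lipschitz continuity of $f_j$ on $\mathcal M$ is understood with respect to the Riemannian distance, i.e.\ $|f_j(x)-f_j(y)|\le L_{f_j}\,\mathrm{dist}(x,y)$ for all $x,y\in\mathcal M$. (On a compact manifold this is automatic for $C^1$ functions, with $L_{f_j}=\max_{x}\|\operatorname{grad} f_j(x)\|$, but we simply use the stated constants.)

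Fix $\mu\in[\mu_{\min},\mu_{\max}]$ and set $x=\gamma(\mu)$, $y=\tilde\gamma(\mu)$. The curve $\tilde\gamma$ is well defined for sufficiently small $\Delta_{\max}$ and $\eta$, by the construction in Algorithm~\ref{alg:hermite-interp} and the normal-coordinate argument in the proof of Theorem~\ref{herror}. The Euclidean norm of the difference of the objective vectors then satisfies
\[
\bigl\|(f_1(x),f_2(x))-(f_1(y),f_2(y))\bigr\|
=\sqrt{|f_1(x)-f_1(y)|^2+|f_2(x)-f_2(y)|^2}
\le \sqrt{L_{f_1}^2+L_{f_2}^2}\,\mathrm{dist}(x,y).
\]

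Next I will take the supremum over $\mu$ and apply Theorem~\ref{herror}, which gives $\sup_\mu \mathrm{dist}(\gamma(\mu),\tilde\gamma(\mu))\le C[\Delta_{\max}\omega(\Delta_{\max})+\eta\Delta_{\max}]$ with $C$ independent of the partition. Multiplying by $\sqrt{L_{f_1}^2+L_{f_2}^2}$ yields \eqref{eq:mpfa-convergence}. Here $\omega$ is the uniform modulus from Theorem~\ref{herror}, i.e.\ the maximum over intervals of the moduli of continuity from Lemma~\ref{lem:Gamma-regularity}.

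For the convergence claim, let $\Delta_{\max}\to0$ with $\eta$ bounded; in Algorithm~\ref{alg:mpfa} the exact velocities of Lemma~\ref{gamma_dif} are used, so $\eta=0$ up to numerical error. Then $\eta\Delta_{\max}\to0$, and $\Delta_{\max}\omega(\Delta_{\max})\to0$ because $\omega(t)\to0$ as $t\downarrow0$; it is in fact enough that $\omega$ is bounded.

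The one delicate point is whether $\omega$ stays controlled uniformly as the partition is refined, since the base points $q_i$ change with the partition. I would handle this by noting that $\|\dot\Gamma_i\|$ is bounded by $\|D\mathrm{Log}_{q_i}\|\,L_\gamma$. On the ball of radius $\rho$ used in the proof of Theorem~\ref{herror}, $\|D\mathrm{Log}_{q}\|$ is uniformly bounded over $q\in\mathcal M$ by compactness. Hence $\omega(t)\le 2\sup\|\dot\Gamma_i\|$ is bounded independently of the partition, and this suffices for the limit to be zero.
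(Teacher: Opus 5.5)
Your proposal is correct and takes the same approach as the paper: combine the manifold bound of Theorem~\ref{herror} with the componentwise Lipschitz estimate, which gives the factor $\sqrt{L_{f_1}^2+L_{f_2}^2}$, and then let $\Delta_{\max}\to 0$. You also note that $\omega$ depends on the partition through the base points $q_i$, and that it stays uniformly bounded via a uniform bound on $\|D\mathrm{Log}_q\|$ near the diagonal; this is a valid tightening of the limit step, which the paper handles only by citing $\omega(t)\to 0$.
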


\begin{proof}
By Theorem \ref{herror}, we have
\begin{equation}\label{eq:hcon}
	\sup_{\mu\in[\mu_{\min},\mu_{\max}]}\mathrm{dist}\big(\gamma(\mu),\tilde\gamma(\mu)\big)
	\le C \left( \Delta_{\max}\omega(\Delta_{\max}) + \eta\Delta_{\max} \right).
\end{equation}

Since $f_1,f_2$ are Lipschitz with constants $L_{f_1},L_{f_2}$,
\[
\begin{aligned}
	&\|(f_1(\gamma(\mu)),f_2(\gamma(\mu))) - (f_1(\tilde\gamma(\mu)),f_2(\tilde\gamma(\mu)))\| \\
	\le& \sqrt{L_{f_1}^2 + L_{f_2}^2} \,\mathrm{dist}(\gamma(\mu),\tilde\gamma(\mu))\\
	\le& \sqrt{L_{f_1}^2 + L_{f_2}^2}C \left( \Delta_{\max}\omega(\Delta_{\max}) + \eta\Delta_{\max} \right).
\end{aligned}
\]  

As $\Delta_{\max}\to 0$, the right-hand side of the above inequality tends to zero, 
implying uniform convergence of $\left(f_1,f_2\right)(\tilde\gamma(\mu))$ to $(f_1,f_2)(\gamma(\mu))$. 
Hence, Algorithm \ref{alg:mpfa} converges to the true Pareto front.	
\end{proof}

\section{Numerical Experiments}\label{sec5}

In this section, we present numerical experiments to illustrate the effectiveness and competitiveness of the proposed MPFA method (Algorithm \ref{alg:mpfa}). 
Several test problems are considered to evaluate the ability of the method to approximate Pareto fronts on manifolds. The performance of the algorithm is assessed in terms of approximation accuracy and computational efficiency. The numerical results demonstrate that the proposed approach can provide accurate approximations of Pareto fronts without choosing multiple initial points. To further evaluate the accuracy of the proposed Pareto front approximation framework, we compare several interpolation strategies for reconstructing the front from sampled points. In particular, we consider  two other representative interpolation methods on manifolds: the Piecewise Geodesic interpolation  and the blended cubic spline with $\lambda=10$ \cite{gm19}. The three interpolation methods are constructed from the same set of anchor points generated by Algorithm~\ref{alg:mpfa}. Moreover, we demonstrate the applicability of the proposed framework to a practical problem arising in sparse principal component analysis (sparse PCA). In this setting, the sparse PCA problem is reformulated as a bi-objective optimization problem on the sphere manifold, and the MPFA method is used to approximate its Pareto front.

\subsection{Approximation Accuracy on Test Problems}\label{experiment1}

To evaluate the approximation performance of the proposed method, we consider six test problems defined on the unit sphere (see Appendix \ref{appendix}).  We present an algorithm framework (Algorithm \ref{alg:evaluate}) for an accuracy evaluation. 
\begin{algorithm}[H]
\caption{Accuracy Evaluation}
\label{alg:evaluate}
\begin{algorithmic}[1]
	\STATE \textbf{Given:}
	Manifold $\mathcal{M}$, objective functions $f_1,f_2:\mathcal{M}\to\mathbb{R}$,
	Riemannian gradients $\mathrm{grad}\,f_1$, $\mathrm{grad}\,f_2$,
	$\hat N\in\mathbb{N}$, 
	tolerance $\epsilon>0$,
	initial point $x_0 \in \mathcal{M}$, set of integers $\mathcal{N}$

	\STATE Determine $\mu_{\min},\mu_{\max}$ adaptively 
	using Algorithm~\ref{alg:adaptive-mu}

	\FOR{$j=1$ to $\hat N$}
	\STATE $\hat\mu_j \gets 
	\mu_{\min} + (\mu_{\max}-\mu_{\min})\frac{j-1}{\hat N-1}$
	\IF{$j=1$} 
	\STATE Compute $$x_1=
	\argmin_{x\in\mathcal M}\varphi_{\hat\mu_1}(x),$$
	using a Riemannian steepest descent method (see \texttt{Manopt}) 
	\ELSE
	\STATE Compute $$x_j=
	\argmin_{x\in\mathcal M}\varphi_{\hat\mu_j}(x),$$
	using a Riemannian Newton method (Algorithm~\ref{alg:newton}) initialized at $x_{j-1}$
	\ENDIF
	\STATE $r(\hat\mu_{j})=x_j$ 
	\ENDFOR

	\FOR{$N\in\mathcal N$}
	\STATE $\tilde{\gamma}_N$ by Algorithm \ref{alg:mpfa} 
	
	\STATE $E_N = \max_{j\in\{1,\cdots,\hat N\}}{\rm dist}( r(\hat\mu_j), \tilde{\gamma}_N(\hat \mu_j) )$
	\ENDFOR
	
\end{algorithmic}
\end{algorithm}
In addition, the proposed MPFA algorithm is first used to generate a sequence of points approximating the Pareto front on the manifold. Then the three interpolation methods are applied to reconstruct a continuous approximation of the Pareto front. The approximation error is evaluated by the maximum Riemannian
distance between the interpolated solution curve and the reference solution curve. For each test problem, we compute the approximation error using the MPFA method with $\hat N=1000$ and $N=50$.
The approximation error is
measured by
\begin{equation}\label{ERN}
E_N = \max_{j=1,\ldots,\hat N} \mathrm{dist}\!\left(r(\hat\mu_j),\tilde{\gamma}_N(\hat{\mu}_j)\right),
\end{equation}
where $\{r(\hat\mu_j)\}$ denotes reference solutions obtained by solving the scalarized problems and $\{\tilde{\gamma}_N(\hat{\mu}_j)\}$ is points on the approximate Pareto front.

Table \ref{tab:interp_error} reports the approximation errors for $N=50$ obtained by the three interpolation methods for six representative test problems. The results indicate that Hermite interpolation generally provides the highest accuracy. Therefore,  Algorithm \ref{alg:mpfa} is effective for constructing accurate Pareto front approximations on manifolds. In the next subsection, we conduct an experiment using the Hermite interpolation method.

\begin{table}[htbp]
\centering
\caption{Approximation errors of different interpolation methods with $N=50$.}
\label{tab:interp_error}
\begin{tabular}{c c c c}
	\hline
	\shortstack{Problems\\(see {\rm Appendix} \ref{appendix})} & Piecewise Geodesic & Blended cubic spline & Hermite \\
	\hline
	Problem 1 & $8.870\times10^{-4}$ & $4.855\times10^{-4}$ & $\mathbf{1.390\times10^{-4}}$ \\
	Problem 2 & $2.214\times10^{-3}$ & $6.807\times10^{-4}$ & $\mathbf{8.378\times10^{-6}}$ \\
	Problem 3 & $5.028\times10^{-4}$ & $2.712\times10^{-4}$ & $\mathbf{3.485\times10^{-6}}$ \\
	Problem 4 & $3.882\times10^{-3}$ & $2.133\times10^{-3}$ & $\mathbf{6.031\times10^{-5}}$ \\
	Problem 5 & $1.918\times10^{-3}$ & $1.226\times10^{-3}$ & $\mathbf{2.787\times10^{-6}}$ \\
	Problem 6 & $6.035\times10^{-4}$ & $3.262\times10^{-4}$ & $\mathbf{5.201\times10^{-6}}$ \\
	\hline
\end{tabular}
\end{table}

\subsection{Application to Sparse Principal Component Analysis}

Principal component analysis (PCA) is a fundamental data processing technique whose core objective is to identify a low-dimensional representation of a given dataset. Such representations are widely used in data denoising, visualization, and pattern recognition. However, classical PCA often produces dense loading vectors that are difficult to interpret in many applications.

To address this limitation, sparse PCA has been proposed. Sparse PCA aims to compute principal components with only a small number of nonzero entries while preserving as much variance as possible.
Given a data matrix $A\in \mathbb{R}^{m\times n}$, the sparse PCA problem can be formulated as
\begin{equation}\label{spca}
\min_{X\in \text{St}(n,r)} 
-\text{trace}(X^\top A^\top A X) + \delta \|X\|_1,
\end{equation}
where $\text{St}(n,r)=\{X\in\mathbb{R}^{n\times r}:X^\top X=I_r\}$ denotes the Stiefel manifold and $\delta$ is a regularization parameter controlling the sparsity level.

When $r=1$, problem \eqref{spca} reduces to
\begin{equation}\label{sspca}
\min_{x\in\mathbb{S}^{n-1}} 
-x^\top A^\top A x + \delta \|x\|_1 ,
\end{equation}
where $\mathbb{S}^{n-1}$ denotes the unit sphere.

It is worth noting that the performance of sparse PCA is highly sensitive to the choice of the regularization parameter $\delta$, which controls the trade-off between variance maximization and sparsity promotion. However, there is no exact or universal rule for selecting an optimal value of $\delta$, and an inappropriate choice may lead to either overly dense components with poor interpretability or excessively sparse solutions that fail to capture sufficient variance. Consequently, the effectiveness of sparse PCA methods can be significantly degraded by an inappropriate parameter setting.

To avoid this difficulty, the sparse PCA problem can be reformulated from a bi-objective optimization perspective, thereby eliminating the need for manually tuning the regularization parameter. From this viewpoint, sparse PCA naturally involves two conflicting objectives that should be optimized simultaneously: maximizing the variance explained by the principal components and enforcing sparsity of the loading vectors. Treating these objectives separately leads to a bi-objective sparse PCA formulation, in which the trade-off between variance preservation and sparsity is explicitly characterized by the Pareto front rather than implicitly controlled by a scalar regularization parameter. Specifically, the bi-objective sparse PCA problem can be formulated as
\begin{equation}\label{bospca1}
\min_{x\in \mathbb{S}^{n-1}}
\Bigl(	
-x^\top A^\top A x,\ \|x\|_1
\Bigr).
\end{equation}
According to the conditions of Lemma \ref{gamma_dif}, we introduce the following smooth approximation of the term $\|x\|_1$, 
\begin{equation}\label{eq:smooth_l1}
\|x\|_{1, \alpha}
\;:=\;
\sum_{i=1}^n
\frac{1}{\alpha} \, \ln\!\left( \exp(\alpha x_i) + \exp(-\alpha x_i) \right).
\end{equation}

An empirical study is conducted to investigate the behavior of Pareto front approximation methods for bi-objective sparse PCA, such as \cite{sa24}, where the data matrix is designed as 
\(
A = \Sigma U^\top \in \mathbb{R}^{3\times 6},
\)
where 
\(
\Sigma = \operatorname{diag}([20, 0.1, 0.05])
\)
and 
\(
U = \begin{bmatrix} \mathbb{I}_3 \\ \mathbf{0}_3 \end{bmatrix} \in \mathbb{R}^{6\times 3}.
\)
Here \(\mathbb{I}_3\) denotes the \(3\times 3\) identity matrix and \(\mathbf{0}_3\) the \(3\times 3\) zero matrix, this choice yields
\[
A^\top A = \begin{bmatrix} \Sigma^2 & \mathbf{0}_3 \\ \mathbf{0}_3 & \mathbf{0}_3 \end{bmatrix}.
\]
In this example, the sparse eigenvector exactly recovers the dominant direction while the remaining components are identically zero, providing a clear ground truth for validating algorithmic performance.

The data matrix is slightly perturbed to generate a nontrivial test instance. Specifically, we set 
\(
A = A + 0.1R_1
\)
where the entries of 
\(R_1\in\mathbb{R}^{3\times 6}\)  
is independently sampled from the standard normal distribution \(\mathcal{N}(0,1)\). This perturbation preserves the essential structure while removing the exact degeneracy, making the problem more representative of practical situations.
In the numerical comparisons, we employ the retraction
\[
R_x(\eta_x) = \frac{x+\eta_x}{\|x+\eta_x\|},\qquad 
x\in\mathbb{S}^{n-1},\;\eta_x\in\mathrm{T}_x\mathbb{S}^{n-1},
\]
on the sphere manifold. 
\begin{figure}[htbp]
\centering
\begin{subfigure}
	\centering
	\includegraphics[width=0.48\textwidth]{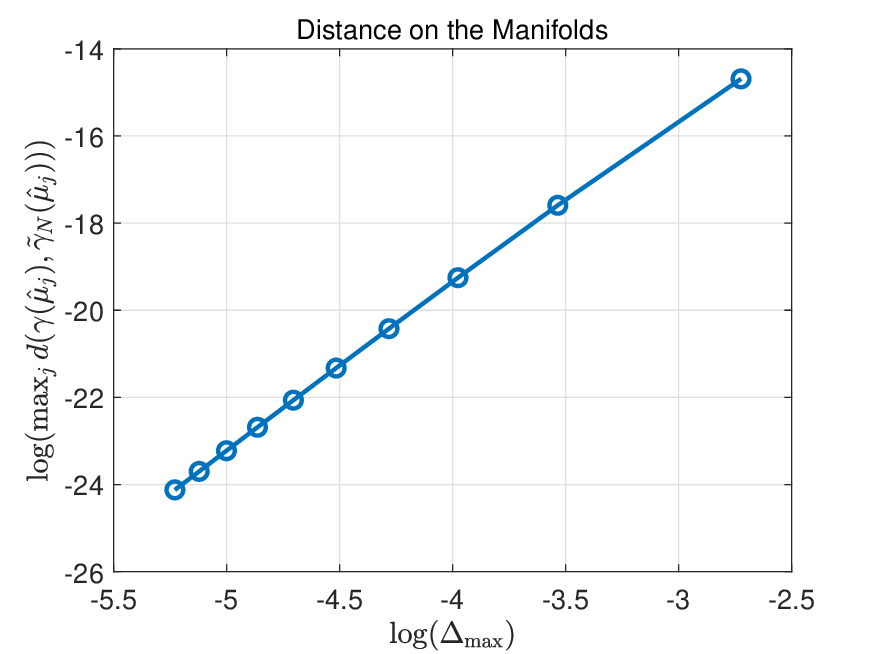}
	\label{fig:sub1}
\end{subfigure}
\hfill
\begin{subfigure}
	\centering
	\includegraphics[width=0.48\textwidth]{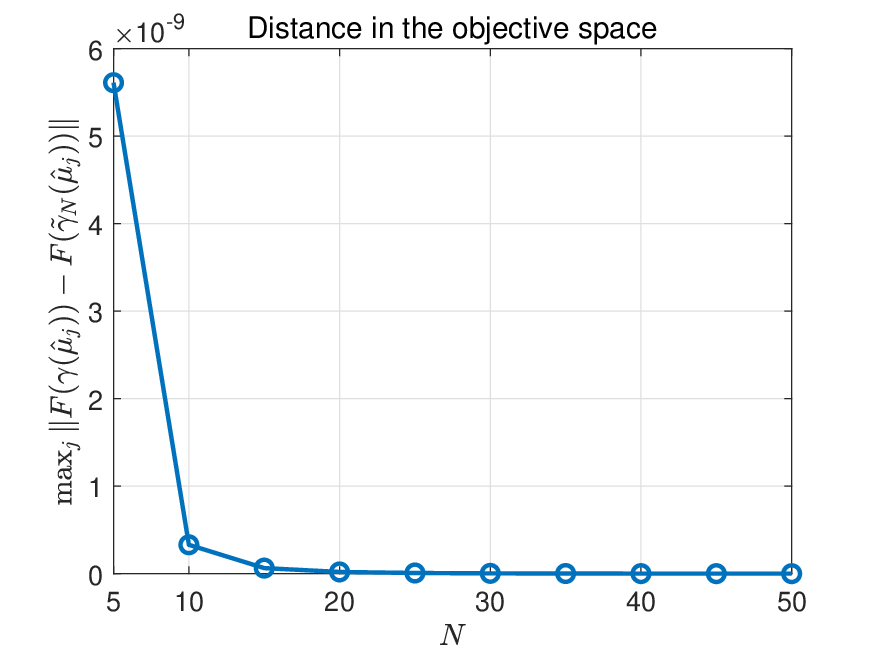}
	\label{fig:sub1.2}
\end{subfigure}

\begin{subfigure}
	\centering
	\includegraphics[width=0.48\textwidth]{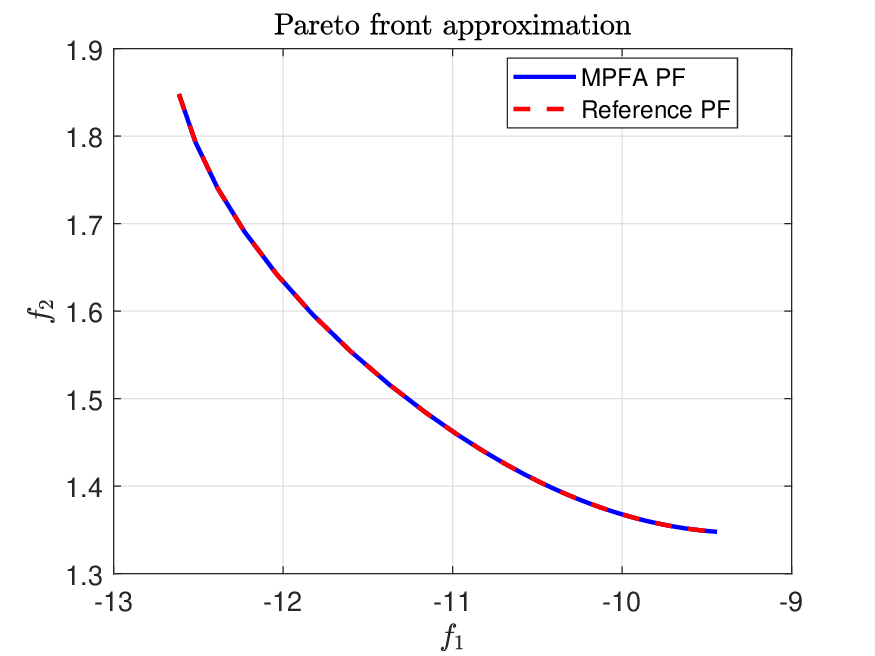}
	\label{fig:sub1}
\end{subfigure}
\caption{The results of the MPFA method for sparse PCA with $A = \Sigma U^\top \in \mathbb{R}^{3\times 6}$.}
\label{figA0d}
\end{figure}

For the following bi-objective sparse PCA problem
\begin{equation}\label{ex1}
\min_{x \in \mathbb{S}^{n-1}} \Bigl( -x^\top A^\top A x,\ \|x\|_{1, 10} \Bigr).
\end{equation}
We compute the error using the MPFA method with  $\hat N=1000$ and $\mathcal{N}=\{20, 40, 60, 80, 100,~120,~150, 180, 200\}$. Fig.~ \ref{figA0d}  illustrates the approximate error on the manifold, the induced error in the objective space, and  comparative  results of
the approximate Pareto front obtained by MPFA (MPFA PF) and 
the  reference front obtained by anchor points generated by Algorithm \ref{alg:mpfa} (Reference PF). The convergence observed on the manifold level is faithfully reflected in the objective space. Moreover, the curves of MPFA PF and Reference PF are almost indistinguishable, demonstrating that MPFA method (Algorithm \ref{alg:mpfa}) accurately captures the global structure of the Pareto front.

To further illustrate the convergence behavior of the proposed Manifold Pareto front approximation method on realistic data, we also test the algorithm with a random data matrix generated from a standard normal distribution. Let $A \in \mathbb{R}^{3 \times 3}$ be a random matrix with entries independently drawn from $\mathcal{N}(0,1)$. We solve the problem \eqref{ex1} with starting from a random initial point uniformly distributed on $\mathbb{S}^{n-1}$ by MPFA method. 
The logarithmic plot of approximate error is shown in Fig. \ref{figAd1}. The numerical results suggest that the estimated convergence order is approximately $3.5478$, which is consistent with the theoretical convergence rate predicted by Theorem \ref{herror}.   As shown in Fig.~\ref{mpfaAsx1},  the continuous interpolation curve generated by Algorithm \ref{alg:mpfa} passes through all computed efficient solutions of problem \eqref{bo}. Fig. \ref{mpfaAfrontx1} shows that the corresponding continuous Pareto front approximation passes through the objective vectors of all these efficient solutions. The MATLAB code used to generate the sparse PCA experiments in
Figures~\ref{figAd1}-\ref{mpfaAfrontx1} is publicly available at
\url{https://github.com/lkp-create/mpfa-riemannian-pareto-front}.

\begin{figure}[htbp]
\centering
\begin{subfigure}
	\centering
	\includegraphics[width=0.48\textwidth]{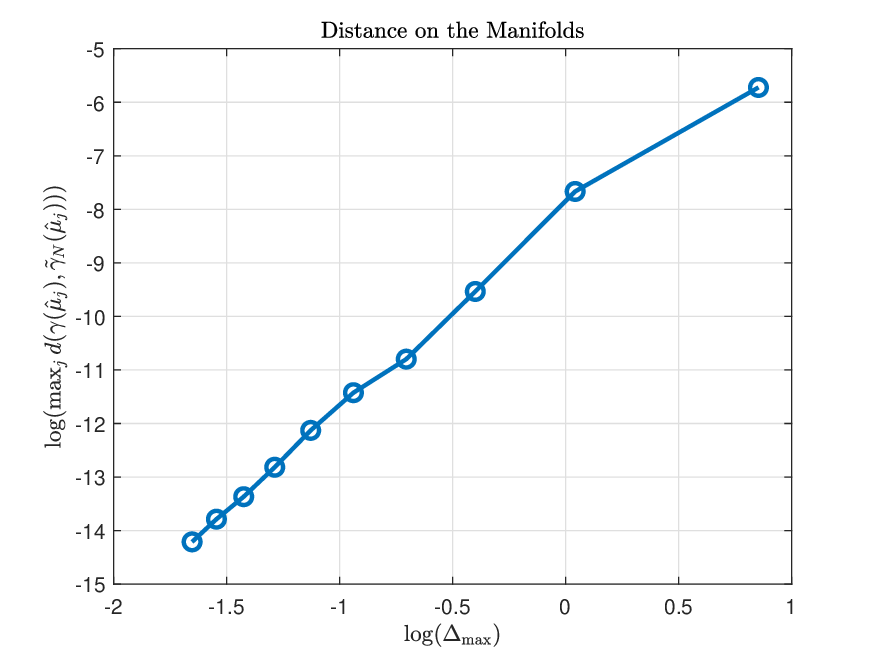}
	\label{fig:sub1}
\end{subfigure}
\hfill
\begin{subfigure}
	\centering
	\includegraphics[width=0.48\textwidth]{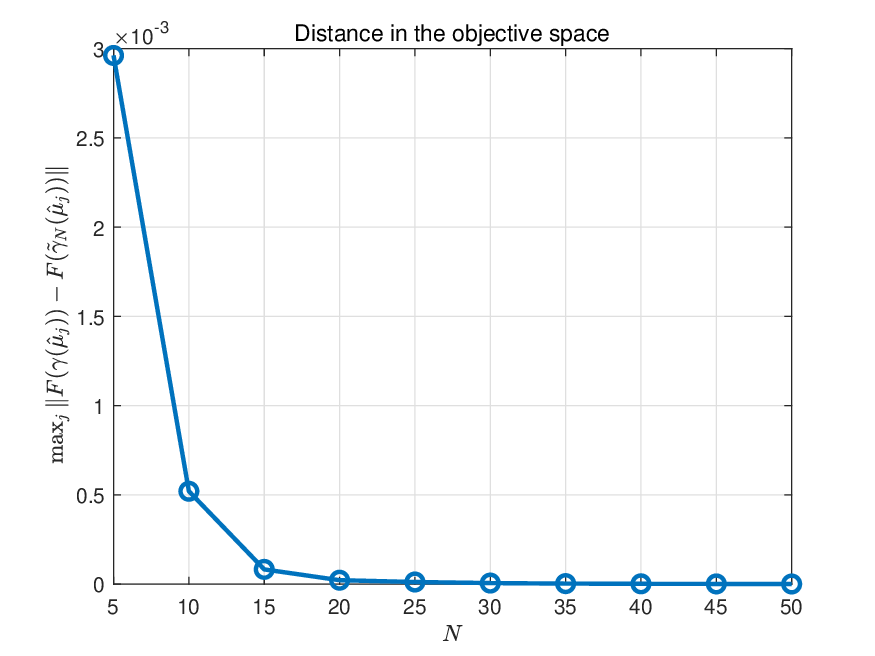}
	\label{fig:sub1.2}
\end{subfigure}
\caption{Error comparison results of the MPFA method for sparse PCA with random matrix $A$.}
\label{figAd1}
\end{figure}

\begin{figure}[htbp]
\centering
\begin{minipage}{0.48\textwidth}
	\centering
	\includegraphics[width=\linewidth]{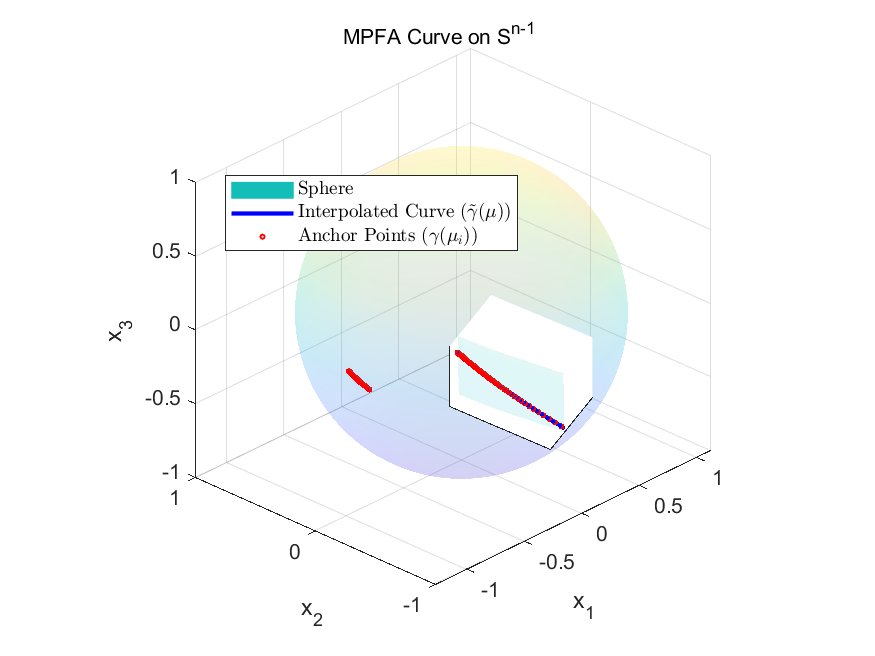}
	\captionof{figure}{Approximate solution curve.}
	\label{mpfaAsx1}
\end{minipage}
\hfill
\begin{minipage}{0.48\textwidth}
	\centering
	\includegraphics[width=\linewidth]{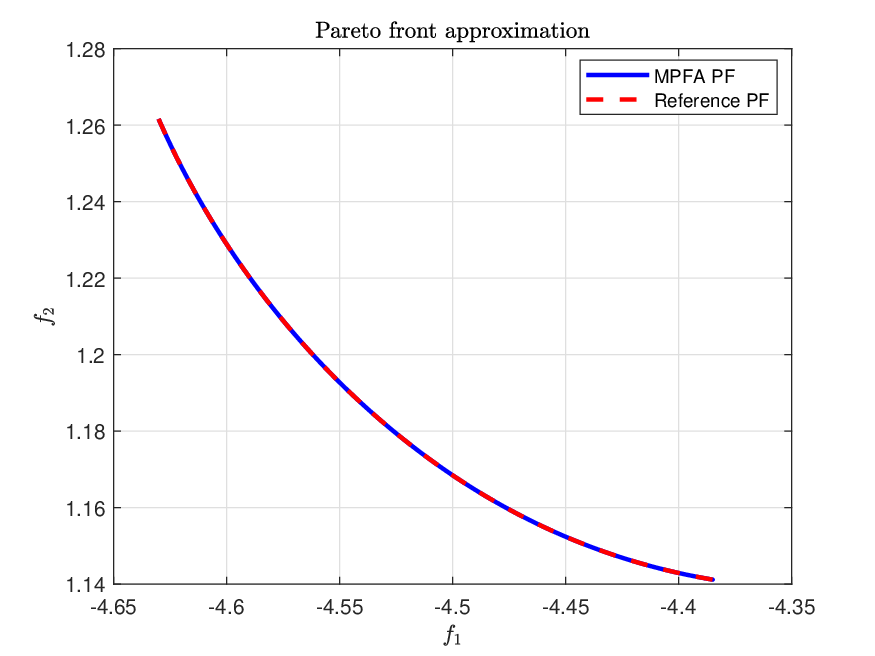}
	\captionof{figure}{Approximate Pareto front.}
	\label{mpfaAfrontx1}
\end{minipage}
\end{figure}

To further illustrate the computational efficiency of different interpolation strategies, we plot the approximation error versus the computational time.  Fig. \ref{fig:errorT} demonstrates that Hermite interpolation achieves higher efficiency, and the blended cubic spline achieves small approximation errors, but requires a longer elapsed time.

\begin{figure}[htbp]
\centering
\includegraphics[width=0.6\textwidth]{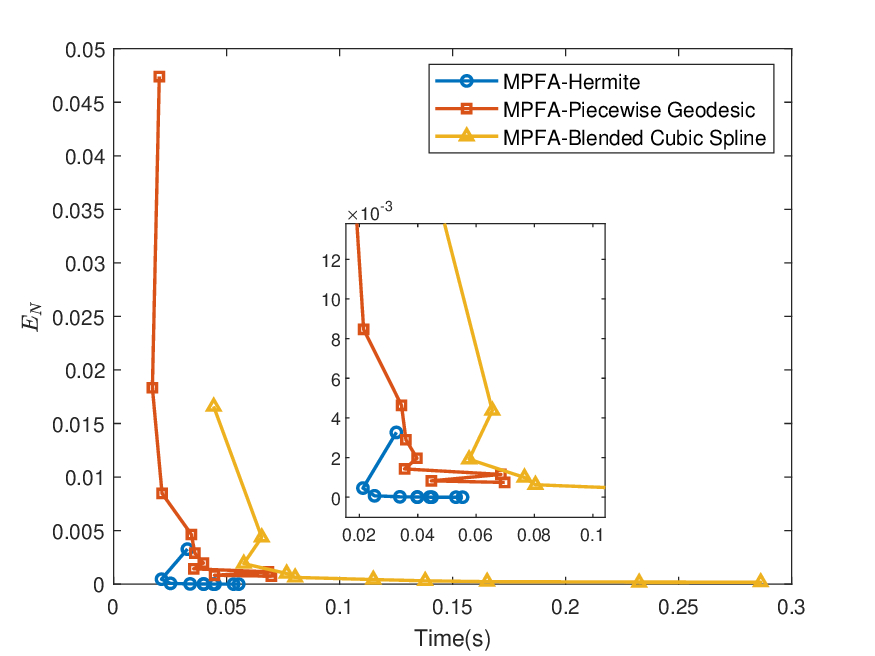}
\caption{Error comparison for different interpolation methods by Algorithm \ref{alg:evaluate} with $\mathcal N=\{5,10,15,20,25,30,35,40,45,50\}$.}
\label{fig:errorT}
\end{figure}

We can highlight the geometric differences among the three interpolation strategies by introducing another synthetic bi-objective test problem on $\mathbb S^2$:
\begin{equation}\label{eq:curved-example}
\min_{x\in\mathbb S^2}
F(x)
=
\bigl(f_1(x),f_2(x)\bigr),
\end{equation}
where
\[
f_1(x)
=
-4x_1+2x_2^2+\frac{1}{2}x_3^2
+2x_1x_2x_3,
\]
and
\[
f_2(x)
=
4x_1+2(x_2-0.7)^2
+\frac{1}{2}(x_3+0.6)^2
-2x_1x_2x_3.
\]
Fig.~\ref{fig:curved-path} shows the reference solution path $\gamma$ and the interpolated curves $\tilde \gamma$ obtained by the three interpolation strategies. Owing to the pronounced curvature of the solution path and the small number of anchor points, the differences among the methods are clear.  The Hermite interpolant demonstrates the  more accurate reproduction of the reference path.
\begin{figure}[htbp]
\centering
\includegraphics[width=1\textwidth]{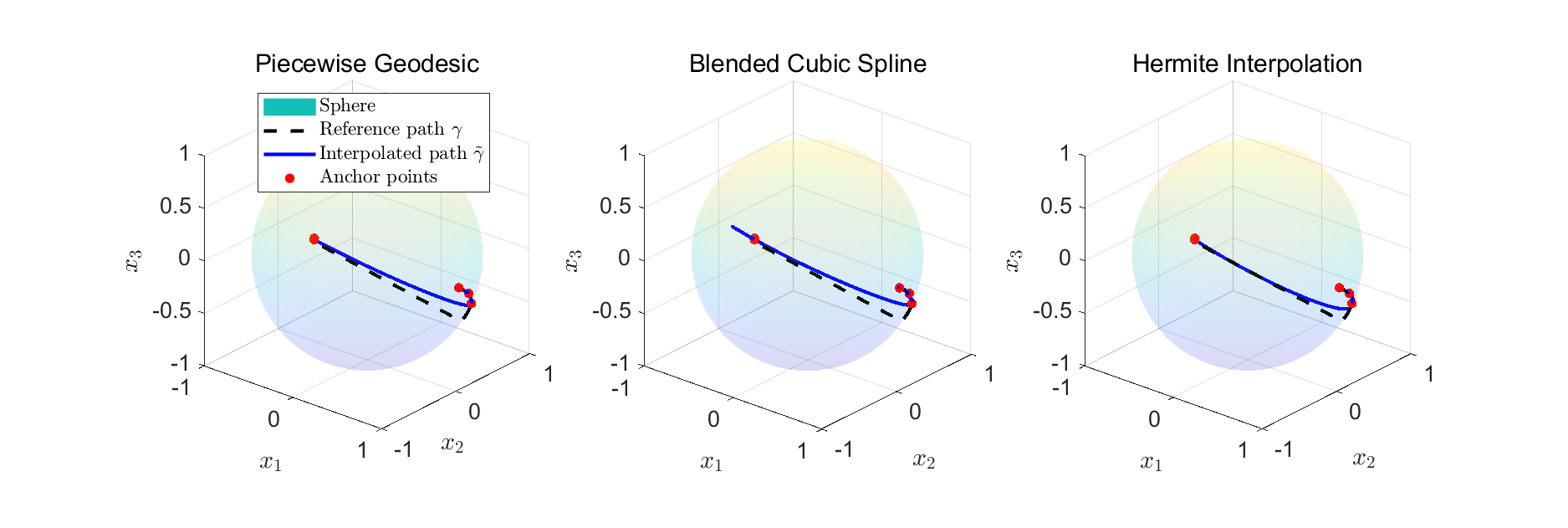}
\caption{Comparison of the reference solution path $\gamma$ and the
	interpolated curve $\tilde{\gamma}$ for the synthetic problem with $N=5$.}
\label{fig:curved-path}
\end{figure}

\section{Conclusions}\label{sec6}

In this paper, we have presented an MPFA method for bi-objective optimization problems on Riemannian manifolds based on a Hermite interpolation technique. The proposed MPFA method constructs an approximation of the Pareto front by tracing a parameterized solution curve starting from a single initial point.
The main idea is to exploit the smooth dependence of the solution of the scalarized problem on the parameter and to approximate the resulting Pareto curve using interpolation techniques on manifolds. Theoretical analysis establishes convergence properties of the proposed framework and provides insight into the approximation accuracy of the constructed Pareto front.
Numerical experiments on several test problems demonstrate that the proposed method is able to accurately capture the Pareto fronts. Comparisons among different interpolation strategies further illustrate the effectiveness of the approach. In addition, the application to bi-objective sparse principal component analysis shows that the proposed method can provide a practical tool for exploring trade-offs between competing objectives in data analysis problems.

\backmatter

\bmhead{Acknowledgements}

This work was partially supported by the National Natural Science Foundation of China (Nos. 12471299, 12071379), the Natural Science Foundation of Chongqing (No. CSTB2025NSCQ-GPX0529), the Fundamental Research Funds for the Central Universities (SWU-KF26001),
the China Scholarship Council program (No. 202506990031), and the Fonds de la Recherche Scientifique--FNRS under Grant no T.0001.23.

\section*{Declaration}
\bmhead{Conflict of interest}
 The authors declare that they have no conflict of interest.


\begin{appendices}

\section{Bi-objective test problems} \label{appendix}

Problem 1 \cite{hh06}:
\begin{equation}
	\begin{split}
		f_1(x_1, x_2) &= -(5 - \frac{(x_1^2 + x_2 - 11)^2 + (x_1 + x_2^2 - 7)^2}{200}),\\		
		f_2(x_1, x_2) &= f_1(2x_1, 2x_2);
	\end{split}
\end{equation}
Problem 2 \cite{te19}:
\begin{equation}
	\begin{split}
		f_1(x) & = \frac{1}{2}x_1^2 + x_2^2 - 10x_1 - 100, \\
		f_2(x) & = x_1^2 + \frac{1}{2}x_2^2 - 10x_2 - 100;
	\end{split}
\end{equation}
Problem 3 \cite{te19}:
\begin{equation}
	\begin{split}
		f_1(x) & = \sin x_2, \\
		f_2(x) & = 1 - \exp\left(-\left(x_1 - \frac{1}{\sqrt{2}}\right)^2 - \left(x_2 - \frac{1}{\sqrt{2}}\right)^2\right);
	\end{split}
\end{equation}
Problem 4 \cite{sk02}:
\begin{equation}
	\begin{split}
		f_1(x)
		&=(x_1-2)^2+(x_2+3)^2+(x_3-5)^2+(x_4-4)^2-5,\\
		f_2(x)
		&=-\frac{\sin(x_1)+\sin(x_2)+\sin(x_3)+\sin(x_4)}
		{1+{x_1^2+x_2^2+x_3^2+x_4^2}/{100}};	
	\end{split}
\end{equation}
Problem 5 \cite{hh06}:
\begin{equation}
	\begin{split}
		f_1(x_1, x_2) & = (x_1 - 1)^2 + (x_1 - x_2)^2, \\
		f_2(x_1, x_2) & = (x_2 - 3)^2 + (x_1 - x_2)^2;
	\end{split}
\end{equation}
Problem 6 \cite{pn06}:
\begin{equation}
	\begin{split}
		f_1(x_1, x_2) & = x_1^4 + x_2^4 - x_1^2 + x_2^2 + 10x_1x_2 + \frac{1}{4}x_1 + 20, \\
		f_2(x_1, x_2) & = (x_1 - 1)^2 + x_2^2.
	\end{split}
\end{equation}

\end{appendices}



\end{document}